\documentclass{article}
\usepackage{amsmath}

\newcommand{\ic}[1]{\textit{#1}}
\newcommand{\ie}{\textit{i.e.,}\:}

\usepackage{amsthm}
\usepackage{amssymb}
\usepackage{url}
\usepackage{graphicx}
\usepackage{algorithmic}
\usepackage{algorithm}
\newtheorem{thm}{Theorem}
\newtheorem{lemma}{Lemma}

\theoremstyle{definition}
\newtheorem{definition}{Definition}
\newtheorem{remark}{Remark}
\newtheorem{example}{Example}
\usepackage{tikz}
\usepackage{tkz-fct}
\usetikzlibrary{arrows,chains,matrix,positioning,scopes}
\usetikzlibrary{arrows,shapes,positioning}
\usetikzlibrary{decorations.markings}
\tikzstyle arrowstyle=[scale=1]
\tikzstyle directed=[postaction={decorate,decoration={markings,
    mark=at position .65 with {\arrow[arrowstyle]{stealth}}}}]
\tikzstyle reverse directed=[postaction={decorate,decoration={markings,
    mark=at position .65 with {\arrowreversed[arrowstyle]{stealth};}}}]

\setlength{\parskip}{1ex}
\setlength{\headsep}{1cm}
\setlength{\topmargin}{.1cm}
\setlength{\textheight}{22cm}
\setlength{\footskip}{1cm}
\setlength{\oddsidemargin}{0cm}
\setlength{\evensidemargin}{0cm}
\setlength{\textwidth}{16 cm}

\begin{document}

\title{A Globally Convergent Newton Method for Polynomials}
\author{Bahman Kalantari \\
Department of Computer Science, Rutgers University, Piscataway, NJ 08854\\
kalantari@cs.rutgers.edu}
\date{}
\maketitle

\begin{abstract}
Newton's method for polynomial root finding is one of mathematics' most well-known algorithms. The method also has its shortcomings: it is undefined at critical points, it could exhibit chaotic behavior and is only guaranteed to converge locally. Based on the  {\it Geometric Modulus Principle} for a complex polynomial $p(z)$, together with a {\it Modulus Reduction Theorem} proved here, we develop the {\it Robust Newton's method} (RNM), defined everywhere with a step-size that guarantees an {\it a priori} reduction in polynomial modulus in each iteration. Furthermore, we prove RNM iterates converge globally, either to a root or a critical point. Specifically, given $\varepsilon $ and any seed $z_0$, in $t=O(1/\varepsilon^{2})$ iterations of RNM, independent of degree of $p(z)$,  either $|p(z_t)| \leq \varepsilon$ or  $|p(z_t) p'(z_t)| \leq \varepsilon$.  By adjusting the iterates at {\it near-critical points}, we describe a {\it modified} RNM that necessarily  convergence to a root. In combination with Smale's point estimation, RNM results in a globally convergent Newton's method having a locally quadratic rate.  We present sample polynomiographs that demonstrate how in contrast with Newton's method RNM smooths out the fractal boundaries of basins of attraction of roots. RNM also finds potentials in computing all roots of  arbitrary degree polynomials.  A particular consequence of RNM is a simple algorithm for solving cubic equations.
\end{abstract}

{\bf Keywords:} Complex Polynomial, Newton Method, Taylor Theorem.

\section{Introduction}
\label{sec1}
One of mathematics' most well-known algorithms, the Newton's method for polynomial root finding stands out as prolific and multi-faceted.  Fruitful enough to remain of continual research interest, it is nevertheless simple enough to be a perennial topic of high school calculus courses; combined with computer graphics tools it can yield dazzling fractal images, infinitely complex by near-definition.

The development of iterative techniques such as Newton's method complement those of algebraic techniques for solving a polynomial equation,  one of the most fundamental and influential problems of science and mathematics.  Through historic and profound discoveries we have come to learn that beyond quartic polynomials there is no general closed formula in radicals for the roots. The book of Irving \cite{Irving} is dedicated to the rich history in the study and development of formulas for quadratic, cubic, quadratic and quintic polynomials.  This complementary relationship between algebraic and iterative techniques becomes evident even for approximating such mundane numbers as the square-root of two, demonstrating the fact that for computing purposes, we must resort to iterative algorithms. These in turn have resulted in new applications and directions of research.

Although Newton's method is traditionally used to find roots of polynomials with real-valued coefficients, it is also valid for complex
polynomials, \ie polynomials of the form
\begin{equation}
p(z)=a_n z^n + \cdots+ a_1z+a_0,
\end{equation}
with coefficients $a_j \in \mathbb{C}$, $z=x+i y$, $i = \sqrt{-1}$,
and $x, y \in \mathbb{R}$.
The Newton iterations are defined recursively by the formula
\begin{equation} \label{eq3}
z_{j+1}=z_j- \frac{p(z_j)}{p'(z_j)}, \quad j=0, 1,...,
\end{equation}
where $z_0 \in \mathbb{C}$ is the starting point, or {\it seed}, and
$p'(z)$ is the derivative of $p(z)$. As in the case of real polynomials,
the derivative function $p'(z)$ is the limit of {\it difference quotient} $(p(z+ \delta z) - p(z))/\delta z$ as $\delta z$ approaches zero and can easily be shown to be the following polynomial, the complex analog of the real case
\begin{equation}
p'(z)=na_nz^{n-1}+ (n-1)a_{n-1} z^{n-2} + \dots + 2a_2 z+a_1.
\end{equation}
We can thus view $z_{j+1}$, called the
{\it Newton iterate},  as  an adjustment of the {\it current iterate}, $z_j$,
in the direction of $-p(z_j)/p'(z_j)$, called {\it Newton direction}. The sequence
$\{z_j\}_{j=0}^\infty$ is called the {\it orbit} of $z_0$, denoted by $O^+(z_0)$. The {\it basin of attraction} of a
root $\theta$ of $p(z)$ is the set of all seeds $z_0$ whose orbit converges to
$\theta$, denoted by $\mathcal{A}(\theta)$. It can be shown that $\mathcal{A}(\theta)$ is an open set, i.e. if $c \in \mathcal{A}(\theta)$, an open disc centered at $c$ is also contained in $\mathcal{A}(\theta)$. For example, for $p(z)=z^2-1$, the basin of attraction $\theta=1$, as proved by Cayley \cite{Cayley}, is the half-plane consisting of the set of all $z=x+iy$ with $x >0$.  However, in the case of $p(z)=z^3-1$, the basin of attraction of $\theta=1$ consists of the union of countably infinite  disjoint open connected sets, only one of which contains $\theta$, called the {\it immediate} basin of attraction of $\theta$, see Figure \ref{Fig1}. A set is {\it connected} if for any pair of points in the set there is a path connecting the points, the path contained in the set.  Beyond quadratic polynomials, the boundary of the basin of attraction of a root is fractal with the peculiar property that it is also the boundary of any other root.   Newton iterations can be viewed as
{\it fixed point} iterations with respect to the rational function
$N_p(z)=z-p(z)/p'(z)$. Notably, the iterate $z_{j+1}$ is
undefined if $z_j$ is a
{\it critical point} of $p(z)$, \ie if
$p'(z_j)=0$.  When the iterate $z_{j+1}$ is well-defined, it can be interpreted as the root of the
linear approximation to $p(z)$ at $z_j$. \par

It is natural to ask if the orbit of a point with respect to Newton iterations converge to a root.  More specifically, given $\varepsilon >0$ and a seed $z_0$, can we decide
if the corresponding orbit results in a point $z_j$ such that $|p(z_j)| < \varepsilon$?  Blum et al. \cite{Blum} consider such question and prove that even for a cubic polynomial it is undecidable. Their notion of decidability in turn requires a notion of machines defined over the real numbers.  In a different study Hubbard et al. \cite{HSS} show all roots of a polynomial can be computed solely via Newton's method when iterated at certain finite set of points. In summary, in this article we develop a modified Newton's method, where starting at any seed the corresponding orbit is guaranteed to converge to a root.

To discuss convergence of our method we must consider {\it modulus} of a complex number $z=x+iy$, defined as
$|z|=\sqrt{x^2+y^2}$. Equivalently,
$|z|=\sqrt{z \overline z}$, where
$\overline z=x-iy$ is the  {\it conjugate} of $z$.  In general, Newton iterates do not
necessarily  monotonically decrease the modulus of the polynomial
after each iteration, \ie at an arbitrary step $j$, we may obtain $p(z_{j+1})$ such that
$|p(z_{j +1})| > |p(z_j)|$. For example, if
$p(z)=z^2-1$, then
$|p(z_{j +1})| > > |p(z_j)|$ for small $|z_j|$.
However, for a general polynomial,  near a {\it simple} root $\theta$
(i.e. $p'(\theta) \not =0$),
the rate of convergence is known to be quadratic, \ie $|z_{j+1} - \theta| \approx |z_j - \theta|^2$. Thus once a point is in the region of quadratic convergence of a root, in very few subsequent iterations  we obtain highly accurate
approximation to the root. Another drawback in Newton's method is that its orbits may not even converge; some cycle. For instance, in the case of $p(z)=z^3-2z+2$,
the Newton iterate  at $z_0=0$  is $z_1=1$ and the iterate at $z_1$ is $z_0$, resulting in a cycle between $0$ and $1$.  In fact under Newton iterations a neighborhood of $0$ gets mapped to a neighborhoods of $1$ and conversely. Figure \ref{Fig1} shows the corresponding polynomiograph. A seed selected in the white area will not converge to any of the roots. Finally, an orbit may converge yet be complex to the point of practical unwieldiness, even when $p(z)$ is a cubic polynomial.

Historically, Cayley \cite{Cayley} is among the pioneers who considered  Newton
iterations for a complex polynomial $p(z)$ and proved that the basin of attraction of each root of $p(z) = z^2 - 1$ equals that root's \ic{Voronoi region}, the set of all points closer to that root than to the other.  Points on the $y$-axis do not converge.  One may anticipate that analogous Voronoi region property would hold with respect to the three roots of $z^3-1$.
However, this
is not so as the well-known {\it fractal} image of it shows (see e.g. \cite{Peit} and other sources such as \cite{Kalan}). Figure \ref{Fig1} is a depiction of this.
The study of the global
dynamics of Newton's method is a fascinating and deep subject closely related to the study of rational iteration functions, see e.g. \cite{Bea} and \cite{Kalan}.
Smale's one-point theory,  \cite{Smale}, is an outstanding
result that ensures quadratic rate of convergence to a root, provided
a certain computable condition is
satisfied at a seed ( see (\ref{onepoint})). However, it is only a local result.

\begin{figure}[h!]
\centering
\includegraphics[width=2.5in]{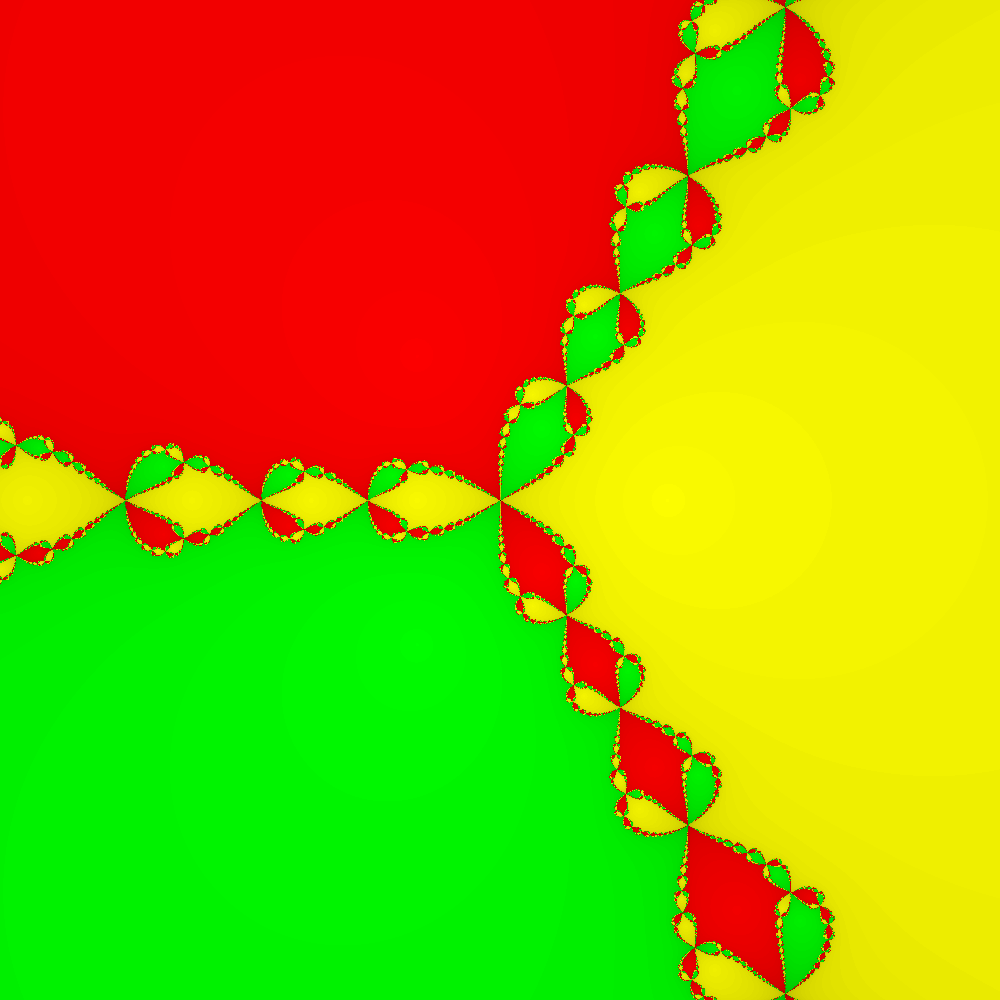}
\includegraphics[width=2.5in]{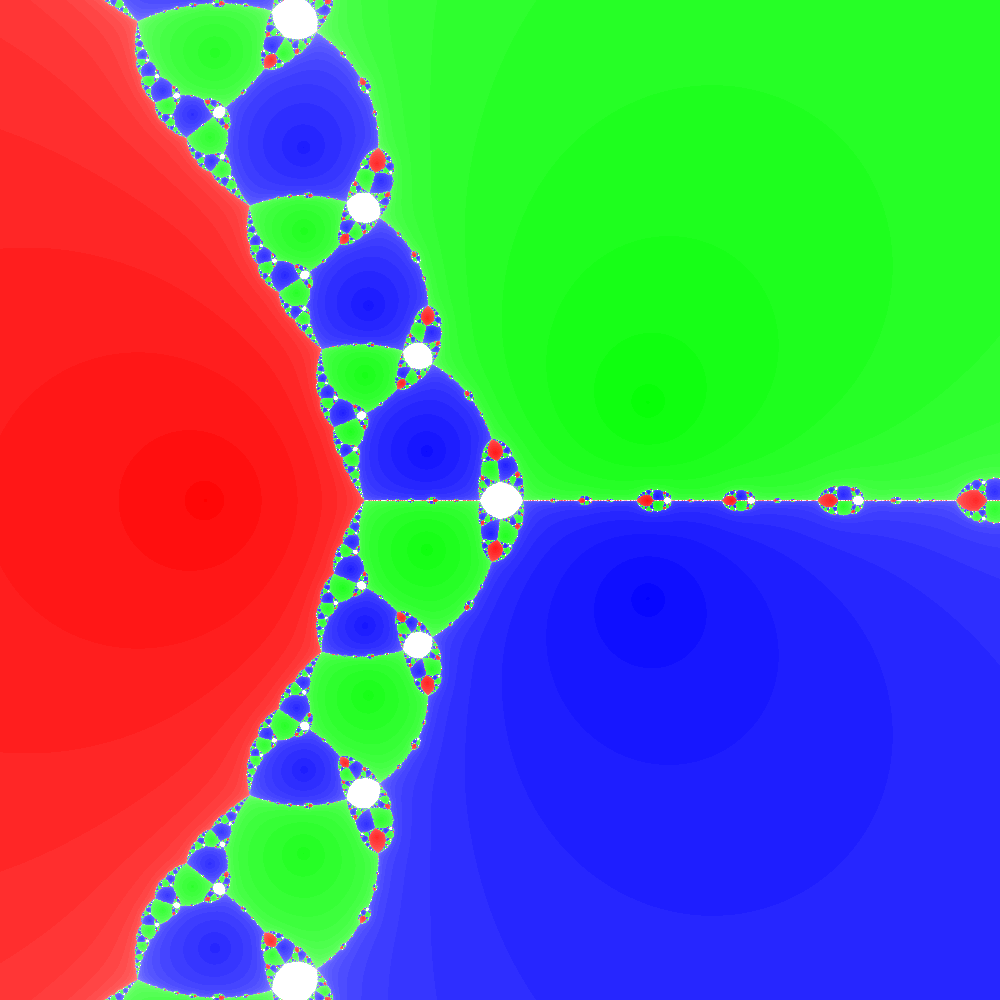}\\
\caption{Polynomiograph of Newton's method for $z^3-1$ (left) and $z^3-2z+2$.} \label{Fig1}
\end{figure}

In this article we develop a robust version of Newton's method based on the minimization of the modulus of a complex polynomial $p(z)$ and a fresh look at the problem. The algorithm is easy to implement so that  the interested reader familiar with complex numbers and Horner's method can implement it and test its performance, or generate computer graphics based on the iterations of points in a square region and color coding analogous to methods for generation of fractal images.

To consider modulus minimization, it is more convenient to consider square of the modulus, namely the function
\begin{equation}
F(z)=|p(z)|^2=p(z) \overline{p(z)}.
\end{equation}
Clearly,
$\theta$ is a root of $p(z)$ if and only if $F(\theta)=0$.  Indeed several proofs of the {\it Fundamental Theorem of Algebra} (FTA) rely on the minimization of $F(z)$. To prove the FTA
it suffices to show that the minimum of $F(z)$  over $\mathbb{C}$ is attained
at a point  $z_*$ and $F(z_*)=0$.
The proof that the minimum is attained is shown by first observing that  $|p(z)|$ approaches infinity as $|z|$ approaches
infinity. Thus the set $S = \{z : |p(z)| \leq|p(0)|\}$ is bounded. Then, by continuity
of $|p(z)|$ it follows that $S$ is also
closed, hence compact. Thus, the minimum of $F(z)$ over $S$ is attained and coincides
with its minimum over $\mathbb{C}$.
Now to prove $F(z_*)=0$, one assumes otherwise and derives a contradiction.
To derive a contradiction, it
suffices to exhibit a {\it direction of  descent}
for $F(z)$ at $z_*$, i.e. a point
$u \in \mathbb{C}$  and a positive real number $\alpha_*$ such that
\begin{equation} \label{step-size}
F(z_*+\alpha u) < F(z_*), \quad  \forall \alpha  \in (0, \alpha_*).
\end{equation}
For proofs of the FTA based on a descent direction,
see \cite{Fin, BK2011, Kon, Lit, Rio}. In fact,
\cite{BK2011} describes  the {\it Geometric Modulus
Principle} (GMP), giving a complete characterization of all
descent and ascent directions for $F(z)$ at an arbitrary point $z_0$:

{\it If $z_0$ is not a root of $p(z)$ the ascent and descent directions at $z_0$ evenly split into alternating sectors of angle $\pi/2k$, where $k$ is the smallest index for which $p^{(k)}(z_0) \not =0$.} Specifically, if
$\overline {p(z_0)} p^{(k)}(z_0)=r_0e^{{i} \alpha}$ then $e^{i\theta}$ is an ascent direction if $\theta$ satisfies the following
\begin{equation}
\frac{2N \pi - \alpha}{k}- \frac{\pi}{2k} < \theta < \frac{2N \pi -\alpha}{k}+ \frac{\pi}{2k}, \quad N \in \mathbb{Z}.
\end{equation}
The remaining are sectors of descent. Figure \ref{Fig2} shows this property for $k=1, 2, 3$.

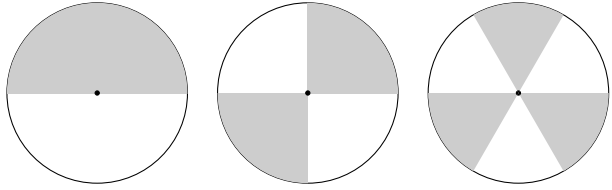
\begin{figure}[htpb]
	\centering
	
	\begin{tikzpicture}[scale=.4]
		\begin{scope}[black]
		 \draw (7.0,0.0) circle (3.);
		 \draw (14.0,0.0) circle (3.);
         \draw (21.0,0.0) circle (3.);
\filldraw (7,0) circle (2pt);
\filldraw (14,0) circle (2pt);
\filldraw (21,0) circle (2pt);
\end{scope}

\begin{scope}[black]
		 \clip  (7.0,0.0) circle (3.);
\clip (4.0,0.0) -- (10.0,.0) -- (10.0,3.0) -- (4.0, 3.0)-- cycle;
\fill[color=gray!39] (-20, 20) rectangle (20, -20);
\filldraw (7,0) circle (2pt);
\end{scope}

\begin{scope}[black]
		 \clip  (14.0,0.0) circle (3.);
\clip (14.0,0.0) -- (17.0,.0) -- (17.0,3.0) -- (14.0, 3.0)-- cycle;
\fill[color=gray!39] (-20, 20) rectangle (20, -20);
\filldraw (14,0) circle (2pt);
\end{scope}
\begin{scope}[black]
\clip  (14.0,0.0) circle (3.);
\clip (11.0,0.0) -- (14.0,.0) -- (14.0,-3.0) -- (11.0, -3.0)-- cycle;
\fill[color=gray!39] (-20, 20) rectangle (20, -20);
\filldraw (14,0) circle (2pt);
\end{scope}

\begin{scope}[black]
\clip  (21.0,0.0) circle (3.);
\clip (21.0,0.0) -- (27.0,0.0)  -- (24,-5.196)-- cycle;
\fill[color=gray!39] (-30, 30) rectangle (30, -30);
\filldraw (21,0) circle (2pt);
\end{scope}

\begin{scope}[black]
\clip  (21.0,0.0) circle (3.);
\clip (21.0,0.0) -- (15.0,0.0)  -- (18,-5.196)-- cycle;
\fill[color=gray!39] (-30, 30) rectangle (30, -30);
\filldraw (21,0) circle (2pt);
\end{scope}

\begin{scope}[black]
\clip  (21.0,0.0) circle (3.);
\clip (21.0,0.0) -- (24.0,5.196)  -- (18,5.196)-- cycle;
\fill[color=gray!39] (-30, 30) rectangle (30, -30);
\filldraw (21,0) circle (2pt);
\end{scope}

\end{tikzpicture}
	\caption{Sectors of ascent (gray) and descent (white) for $k=1,2,3$.}
\label{Fig2}
\end{figure}

In \cite{BK2014}
a specific descent direction is given, resulting
in a short proof of
the FTA. However, neither \cite{BK2011} nor \cite{BK2014}
provide a guaranteed step-size (see $\alpha_*$ in (\ref{step-size})) and
consequently no estimate of
the {\it decrement}  $F(z_*+\alpha_* u) -F(z_*)$. We accomplish these in this article.

By making use of GMP for a complex polynomial $p(z)$, together with a {\it Modulus Reduction Theorem} proved in the article, we develop the {\it Robust Newton Method} (RNM), defined everywhere with a guaranteed amount of reduction in the polynomial modulus in the next iteration. Furthermore, we prove the iterates converge globally, either to a root or a critical point. Specifically, given $\varepsilon $, for any seed $z_0$, in $t=O(1/\varepsilon^{2})$ iterations of RNM,  either $|p(z_t)| \leq \varepsilon$ or  $|p(z_t) p'(z_t)| \leq \varepsilon$.  By adjusting RNM iterates at {\it near-critical points}, the iterates of a {\it modified} RNM  bypass critical points and  converge to a root. In combination with Smale's condition, RNM results in a globally convergent method, having locally quadratic rate of convergence.  We present small degree polynomiographs for RNM and discuss its potentials in computing all roots of an arbitrary degree polynomial. In particular, RNM gives a simple algorithm for solving cubic equations.

The article is organized as follows. In Section \ref{sec2}, we describe the necessary ingredients and define the Robust Newton iterate. Then we
state the modulus reduction theorem (Theorem \ref{thm1}),  describe the Robust Newton Method (RNM), state the complexity theorem for RNM (Theorem \ref{thm2}), and state the modified RNM, as well as its convergence properties (Theorem \ref{thm3}). All proofs are given in subsequent sections. Thus an interested reader can begin implementing the algorithm after reading Section \ref{sec2}. To make the results clear for the reader, in Section \ref{sec3} we consider solving quadratic and cubic equations via RNM, prove convergence properties for quadratics (Theorem \ref{thm4}) and then for cubics (Theorem \ref{thm5}). In Section \ref{sec4}, we prove Theorem \ref{thm1} but first we state and prove two auxiliary lemmas.
In Section \ref{sec5}, we make use of Theorem \ref{thm1} to prove Theorem \ref{thm2} and subsequently we prove the convergence of modified RNM, Theorem \ref{thm3}.  In Section \ref{sec6}, we consider further algorithmic application of RNM, in particular its combination with Smale's one point theory, as well as potentials of RNM in computing all roots of a polynomial. We end with concluding remarks.

\section{The Robust Newton Iterate}
\label{sec2}
In this section we describe how to compute a direction of descent
and step-size for $F(z)=|p(z)|^2$ at any seed $z_0$ in order to get a new point $z_1$ and state an estimate of the decrement, $F(z_1)-F(z_0)$. This direction is a special direction selected in a sector of descent as depicted in Figure \ref{Fig2}. When $k>1$ there are $2k$ such sectors, however it suffices to choose only one such  sector.

Given $z \in \mathbb{C}$ with $p(z) \not =0$, set
\begin{equation} \label{kindex}
k  \equiv k(z)= \min \{j \geq 1:  p^{(j)}(z) \neq 0\}, \quad
A(z)= \max \bigg \{\frac{|p^{(j)}(z)|}{j!}: j=0, \dots, n \bigg \}, \end{equation}
\begin{equation}  \label{eq4param}
u_k \equiv u_k(z) =  \frac{1}{k!}p(z) {\overline {p^{(k)}(z)}}, \quad \gamma = \phantom{-}2\cdot Re(u_k^{k-1}), \quad \delta =-2\cdot Im(u_k^{k-1}), \quad
c_k= \max \{|\gamma|, |\delta|\}.
\end{equation}
Finally, let $\theta$ be any angle that satisfies the following conditions.
\begin{equation} \label{angle}
\theta=
\begin{cases}
0, & {~\rm if ~} c_k = |\gamma|, \gamma < 0 \\
\pi/k, & {~\rm if ~} c_k = |\gamma|, \gamma > 0 \\
\pi/2k, & {~\rm if ~}c_k = |\delta|, \delta < 0 \\
3\pi/2k, & {~\rm if ~}c_k = |\delta|, \delta > 0 \\
\end{cases}.
\end{equation}

\bigskip\par

\begin{definition} \label{RN}
The {\it Robust Newton iterate} at $z$ is defined as
\begin{equation} \label{NewtonGen}
\widehat N_p(z)= z+  \frac{C_k}{3} \frac{u_k}{|u_k|} e^{i \theta}, \quad C_k= \frac{c_k|u_k|^{2-k}}{6A^2(z)}.
\end{equation}
The {\it Robust Newton Method} (RNM) is defined according to the iteration:
\begin{equation}
z \gets \widehat N_p(z).
\end{equation}
The RNM {\it orbit} of $z$ is denoted by $\widehat O^+(z)$. We refer to $ (u_k/|u_k|) e^{i \theta}$ as the
{\it Normalized Robust Newton direction} at $z$.
We call $C_k/3$  the {\it step-size}. In particular, when
$k=1$, $c_1=2$ and
$\theta=\pi$. Thus
$e^{i \theta}= e^{i \pi}=-1$, and $C_1=|u_1|/3A^2(z)$ so that
\begin{equation} \label{Newtonk=1}
\widehat N_p(z)=z- \frac{p(z)\overline{p'(z)}}{9A^2(z) }=
z- \frac{|p'(z)|^2}{9A^2(z)} \frac{p(z)}{p'(z)}.
\end{equation}
\end{definition}

\begin{remark}  According to Definition \ref{RN} the Robust Newton iterate is defined everywhere, including
at critical points.  In particular, when $k=1$ RNM Direction
is simply a positive scalar multiple of the standard Newton direction.
Also, by the definition of $A(z)$, ${|p'(z)|^2}/{9A^2(z)} \leq 1/9$.
Thus
the Robust Newton iterate always lies on the line segment between $z$ and the standard
Newton iterate, $z-p(z)/p'(z)$. This
seemingly simple modification when $k=1$, together with
the ability
to define the iterates when $k >1$,  will guarantee
that the polynomial modulus at the new point, $\widehat N_p(z)$,
will necessarily decrease by a
computable estimate as described by the
main theorem, Theorem \ref{thm1}:
\end{remark}

\begin{thm} \label{thm1}  {\rm {\bf (Modulus Reduction Theorem)}}
Let $p(z)$ be a polynomial of
degree $n \geq 2$, $F(z)=|p(z)|^2$.
Given $z_0 \in \mathbb{C}$ with  $p(z_0) \not =0$, let $z_1=\widehat N_p(z_0)$. Then
\begin{equation} \label{step}
F(z_1)- F(z_0) \leq  - 9A^2(z_0) \bigg ({\frac{C_k}{3}} \bigg )^{k+1}
\leq  -\frac{1}{2} \frac{|u_k(z_0)|^{k+1}}{18^k A^{2k}(z_0)} \equiv \Delta_k(z_0).
\end{equation}
\end{thm}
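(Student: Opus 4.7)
The plan is to use a finite Taylor expansion of $p$ at $z_0$, exploit the specific choice of step direction $(u_k/|u_k|)e^{i\theta}$ and step size $C_k/3$ to make the leading-order part of $F(z_1)-F(z_0)$ maximally negative, and then show that the tail cannot undo more than half of this gain. Writing $h = z_1-z_0$, the fact that $k$ is the smallest positive index with $p^{(k)}(z_0)\ne 0$ gives
\[
p(z_1) = p(z_0) + \frac{p^{(k)}(z_0)}{k!}\,h^k + r(h), \qquad r(h) = \sum_{j=k+1}^{n}\frac{p^{(j)}(z_0)}{j!}\,h^{j}.
\]
Squaring and subtracting $F(z_0)=|p(z_0)|^{2}$ yields the clean decomposition
\[
F(z_1)-F(z_0) = 2\,\mathrm{Re}\bigl(\overline{u_k}\,h^{k}\bigr) + 2\,\mathrm{Re}\bigl(\overline{p(z_0)}\,r(h)\bigr) + \bigl|p(z_1)-p(z_0)\bigr|^{2},
\]
where $\overline{u_k}=\overline{p(z_0)}\,p^{(k)}(z_0)/k!$. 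The rest of the argument is to evaluate the first summand exactly and bound the other two in magnitude.

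The first main step is to compute the leading term in closed form. Direct calculation using $h^{k}=(C_k/3)^{k}(u_k/|u_k|)^{k}e^{ik\theta}$ gives $\overline{u_k}\,h^{k}=(C_k/3)^{k}|u_k|^{2-k}\,u_k^{k-1}e^{ik\theta}$, so the leading summand equals $(C_k/3)^{k}|u_k|^{2-k}\cdot 2\,\mathrm{Re}\bigl(u_k^{k-1}e^{ik\theta}\bigr)$. The four choices of $\theta$ in (\ref{angle}) correspond precisely to multiplying $u_k^{k-1}$ by one of $\{1,-1,i,-i\}$; a short case check using $\gamma=2\,\mathrm{Re}(u_k^{k-1})$ and $\delta=-2\,\mathrm{Im}(u_k^{k-1})$ verifies that in every case the rule picks the multiplier for which $2\,\mathrm{Re}(u_k^{k-1}e^{ik\theta})=-c_k$. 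Since $c_k|u_k|^{2-k}=6A^{2}C_k$ by the definition of $C_k$, the leading term collapses to $-18A^{2}(C_k/3)^{k+1}$.

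The harder step, and the place where the main obstacle lies, is controlling the two tail terms uniformly by at most $9A^{2}(C_k/3)^{k+1}$. I would first record as auxiliary lemmas the two elementary facts $|u_k|\le A^{2}(z_0)$ (since both $|p(z_0)|$ and $|p^{(k)}(z_0)|/k!$ are among the moduli defining $A$) and $c_k\le 2|u_k|^{k-1}$ (since $c_k\le\sqrt{\gamma^{2}+\delta^{2}}=2|u_k^{k-1}|$). Together these imply $C_k\le 1/3$ and hence $|h|=C_k/3\le 1/9$. A geometric-series estimate then gives $|r(h)|\le A|h|^{k+1}/(1-|h|)\le (9/8)A(C_k/3)^{k+1}$ and, analogously, $|p(z_1)-p(z_0)|\le (9/8)A(C_k/3)^{k}$. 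Using $|p(z_0)|\le A$ and $(C_k/3)^{k-1}\le 1$, the two tail contributions are bounded by $A^{2}(C_k/3)^{k+1}\bigl[9/4+(9/8)^{2}\bigr]=(225/64)\,A^{2}(C_k/3)^{k+1}\le 9A^{2}(C_k/3)^{k+1}$. Combining with the leading term produces the first inequality in (\ref{step}).

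For the second inequality I would strengthen the earlier estimate to $c_k\ge\sqrt{2}\,|u_k|^{k-1}$, which follows from $|u_k^{k-1}|^{2}\le 2\max\{|\gamma/2|,|\delta/2|\}^{2}=c_k^{2}/2$. This gives $C_k\ge\sqrt{2}\,|u_k|/(6A^{2})$; substituting into $9A^{2}(C_k/3)^{k+1}$ and using $9/18^{k+1}=1/(2\cdot 18^{k})$ yields
\[
9A^{2}(C_k/3)^{k+1}\;\ge\;\frac{(\sqrt{2})^{k+1}}{2}\cdot\frac{|u_k|^{k+1}}{18^{k}A^{2k}}\;\ge\;\frac{1}{2}\,\frac{|u_k|^{k+1}}{18^{k}A^{2k}},
\]
since $(\sqrt{2})^{k+1}\ge 1$ for $k\ge 1$. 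After negating, this is exactly the right-hand bound $\Delta_k(z_0)$, completing the chain in (\ref{step}).
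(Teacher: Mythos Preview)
Your argument is correct and follows the same overall strategy as the paper---Taylor expand $p$ at $z_0$, evaluate the order-$k$ contribution exactly via the choice of $\theta$ (the paper's Lemma~\ref{lem1}), then control the higher-order tail by a geometric series---but your execution is more elementary in two respects worth noting. First, you use the three-term decomposition $F(z_1)-F(z_0)=2\,\mathrm{Re}(\overline{u_k}h^{k})+2\,\mathrm{Re}(\overline{p(z_0)}\,r(h))+|p(z_1)-p(z_0)|^{2}$, whereas the paper writes $p(z_1)=a_0+a_kw^{k}+W$ and expands $|p(z_1)|^{2}$ into six cross terms that are bounded one by one; your grouping of the quadratic piece as $|p(z_1)-p(z_0)|^{2}$ is cleaner and yields the sharper tail constant $225/64$ in place of the paper's effective $6\cdot\tfrac{9}{8}$. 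Second, and more substantively, you bypass the paper's Lemma~\ref{lem2} entirely: the paper recasts the estimate as $F(z_1)-F(z_0)\le 6A^{2}H_{C_k}(x)$ with $H_C(x)=-Cx^{k}+x^{k+1}/(1-x)$, then differentiates $H_C$, solves a quadratic to locate its minimizer, and only afterwards evaluates at $x=C_k/3$; your direct numerical bounds make this optimization lemma unnecessary. For the second inequality in~(\ref{step}) the paper uses only $c_k\ge|u_k|^{k-1}$, which already suffices (it gives exactly the factor $1/2$); your stronger $c_k\ge\sqrt{2}\,|u_k|^{k-1}$ is correct but superfluous here.
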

In particular, when $k=1$ the first upper bound
in (\ref{step}) implies
\begin{equation} \label{step1}
F(z_1)- F(z_0) \leq  -  \frac{ |p(z_0) p'(z_0)|^2}{9A^2(z_0)} \equiv \Delta_1(z_0).
\end{equation}

Theorem \ref{thm1} gives rise to  {\it Robust Newton Method} (RNM) described below as Algorithm 1. According to (\ref{step1}) except for $(n-1)$ critical points, RNM assures a reduction in the modulus of $p(z)$ in moving from $z_0$ to $z_1$.  In particular, since we can find bounds on the roots of $p(z)$ we can bound $A(z)$ when $z$ lies within such a bound so that the modulus of $z_1$ decreases by amount proportional to $|p(z_0)p'(z_0)|^2$.  We will prove the theorem in Section \ref{sec4}, however it suggests replacing the current iterate $z_0$ with $z_1=\widehat N_p(z_0)$ and repeating.  In contrast with standard Newton's method the additional work is the computation of $A(z_0)$.  However, we can bound this quantity so that we may not need to compute it in every iteration. We define the orbit at a seed $z_0$, denoted by $\widehat O^+(z_0)$ to be the corresponding sequence of iterates. The basin of attraction of a root $\theta$ under the iterations of RNM, denoted by
$\mathcal{\widehat A}(\theta)$ is the set of all seeds whose orbit converges to $\theta$.

\begin{algorithm}[htb]  \label{alg}                    
\caption{Robust Newton Method (RNM)}          
\small
\begin{algorithmic}[]                    
\STATE Input: polynomial $p(z)$ of degree $n \geq 2$, $\varepsilon \in (0,1)$
\STATE  Pick $z_0 \in \mathbb{C}$, $t \gets 0$
 \WHILE { $|p(z_t)| >  \varepsilon$ and $|p(z_t)p'(z_t)| > \varepsilon $ }
 \STATE $z_{t+1} \gets \widehat N_p(z_t)$
 \STATE $t \gets t+1$
 \ENDWHILE
 \RETURN $z_t$
 \end{algorithmic}
\end{algorithm}

Except at $(n-1)$ critical points,
the index $k$ equals $1$ so that the iterate is defined according
to (\ref{Newtonk=1}).
As we will prove, this simple modification of
Newton's method assures global convergence to a root or a critical
point of $p(z)$, while reducing $|p(z)|$ at each iteration as described in Theorem \ref{thm1}.  The following theorem described the convergence properties of RNM.

\begin{thm} \label{thm2}{\rm {\bf (Convergence and Complexity of RNM)}} Given $\varepsilon  \in [0,1)$ and $z_0 \in \mathbb{C}$ a seed with $|p(z_0)| >  \varepsilon$,
let $S_0=\{z: |p(z)| \leq |p(z_0)|\}$.

(i) $\widehat O^+(z_0)  \in S_0$.

(ii) Let
\begin{equation}
A_0 = \max \left \{ \frac{|p(z)^{(j)}|}{j!}: j=0,
\dots, n,  z \in S_0 \right \}, \quad  \Delta_{\varepsilon}= - \frac{\varepsilon^2}{9A_0^2}.
\end{equation}
The smallest number of iterations $t$ to have
$|p(z_t)|  \leq \varepsilon$ or
$|p(z_t)p'(z_t)| \leq  \varepsilon$ satisfies
\begin{equation}
t \leq \frac{|p(z_0)|^2}{\Delta_\varepsilon}=O\left ( \frac{1}{\varepsilon^2} \right ).
\end{equation}

(iii) Let {\it critical threshold} $M$ be defined as
\begin{equation}
M= \min \{|p(w)|: p'(w)=0, p(w) \not =0\}.
\end{equation}
If $|p(z_0)| \leq M$,  $\widehat O^+(z_0)$  converges to a root of $p(z)$.

(iv)  For any root $\theta$ of $p(z)$, the RNM basin of attraction, $ \mathcal{\widehat A}(\theta)$,  contains an open disc centered at $\theta$.

(v) Given a critical point $c$ of $p(z)$, $p(c) \not =0$, in any neighborhood of $c$ there exists $z$ such that if $z'=\widehat N_p(z)$, $|p(z')| < |p(c)|$ so that $\widehat O^+(z)$ will not converge to $c$.
\end{thm}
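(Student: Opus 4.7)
The plan is to derive each of the five claims from the Modulus Reduction Theorem (Theorem~\ref{thm1}).

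\textbf{Parts (i) and (ii).} Claim (i) is immediate: iterating $F(z_{t+1}) \leq F(z_t)$ from Theorem~\ref{thm1} keeps $|p(z_t)| \leq |p(z_0)|$, so $z_t \in S_0$ for every $t$. For (ii), during every iteration of the while loop, $|p(z_t) p'(z_t)| > \varepsilon > 0$ forces $p'(z_t) \neq 0$, hence $k(z_t) = 1$ and the sharper bound (\ref{step1}) applies. Combined with $A(z_t) \leq A_0$ (from (i)) and $|p(z_t) p'(z_t)|^2 > \varepsilon^2$, this yields a per-step decrement of at least $\varepsilon^2/(9 A_0^2)$; telescoping against $F(z_0) = |p(z_0)|^2$ bounds the iteration count by $9 A_0^2 |p(z_0)|^2 / \varepsilon^2 = O(1/\varepsilon^2)$.

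\textbf{Part (iii).} Suppose $|p(z_0)| \leq M$. Since Theorem~\ref{thm1} gives a strict decrease whenever $p(z_t) \neq 0$, replacing $z_0$ by $z_1$ in the boundary case $|p(z_0)| = M$, I may assume $|p(z_t)| < M$ for every $t$, so each $z_t$ is non-critical and (\ref{step1}) applies throughout. Let $L = \lim_t |p(z_t)|$, which exists by monotonicity. I claim $L = 0$: if not, pick an accumulation point $z^*$ of the orbit in the compact $S_0$, so $|p(z^*)| = L > 0$. Either $z^*$ is non-critical, whence continuity yields a subsequence with $|p(z_{t_j}) p'(z_{t_j})|$ bounded below by a fixed $\delta > 0$, producing infinitely many decrements of size $\geq \delta^2/(9 A_0^2)$ and contradicting $F \geq 0$; or $z^*$ is critical, whence $L = |p(z^*)| \geq M$ contradicts $L < M$. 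Hence $L = 0$ and every accumulation point is a root of $p$. Finally, $|z_{t+1} - z_t| = |p(z_t) p'(z_t)|/(9 A^2(z_t)) \to 0$ because $|p(z_t)| \to 0$ while $|p'|$ is bounded on $S_0$ and $A(z) \geq |a_n| > 0$; the standard result that the accumulation set of a bounded sequence with vanishing increments is connected then collapses this set (being contained in the finite root set) to a single root, yielding convergence.

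\textbf{Part (iv).} Fix a root $\theta$ and choose a disc $D_r = \{|z - \theta| < r\}$ small enough that (1) $\theta$ is the only root of $p$ in $\overline{D_r}$, (2) $|p(z)| < M$ on $\overline{D_r}$ (possible since $p(\theta) = 0$), and (3) $|\widehat N_p(z) - \theta| < |z - \theta|$ for all $z \in D_r \setminus \{\theta\}$. To verify (3) I write $\widehat N_p(z) - \theta = (z - \theta)[1 - \beta(z)]$ after the factorization $p(z) = (z - \theta)^m q(z)$, $q(\theta) \neq 0$; the leading term of $\beta(z)$ is the positive real $m |q(\theta)|^2 |z - \theta|^{2(m-1)}/(9 A^2(\theta))$, which for small $|z - \theta|$ dominates $|\beta(z)|^2$, giving $|1 - \beta(z)| < 1$. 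With (1)--(3) in hand, any orbit started in $D_r$ stays in $D_r$, part (iii) forces it to converge to a root that must lie in $\overline{D_r}$, and (1) identifies that root as $\theta$, so $D_r \subseteq \mathcal{\widehat A}(\theta)$.

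\textbf{Part (v) and main obstacle.} Since $c$ belongs to every neighborhood of $c$, I take $z = c$; Theorem~\ref{thm1} applies because $p(c) \neq 0$ and yields $|p(\widehat N_p(c))| < |p(c)|$. Monotonicity from (i) then keeps $|p(z_t)| \leq |p(\widehat N_p(c))| < |p(c)|$ for every $t \geq 1$, which by continuity of $|p|$ is incompatible with $z_t \to c$. The core difficulty of the whole theorem is really contained in (iii): turning the summable-decrement bound of Theorem~\ref{thm1} into convergence to a \emph{single} root requires both the $M$-threshold (to exclude critical accumulation points) and the vanishing step-size estimate (to collapse the accumulation set via connectedness).
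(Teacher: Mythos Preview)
Your proof is correct and, in parts (iii) and (iv), more complete than the paper's own argument. For (i), (ii) you do exactly what the paper does. For (iii) the paper simply asserts that $p(z_t)p'(z_t)\to 0$ ``implies $z_t$ converges to a root,'' without addressing why the limit is zero on the $p$-factor rather than the $p'$-factor, or why the orbit cannot oscillate among several roots; your accumulation-point dichotomy together with the vanishing-increment/connectedness lemma fills both gaps cleanly. For (iv) the paper argues by contradiction: if no disc works, take $w_t\to\theta$ whose orbits converge to some other root $\theta'$, and observe that $\widehat N_p(w_t)\to\theta$ by continuity. As written this only controls the \emph{first} iterate of each orbit and does not by itself contradict eventual convergence to $\theta'$; your explicit local contraction $|\widehat N_p(z)-\theta|<|z-\theta|$ via the factorization $p(z)=(z-\theta)^m q(z)$ traps the whole orbit in the disc and then lets (iii) finish the job, which is both more direct and handles multiple roots uniformly. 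For (v) the paper appeals to the Geometric Modulus Principle, whereas you simply take $z=c$ and invoke Theorem~\ref{thm1}; your choice is shorter and stays within the paper's own machinery.
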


Theorem \ref{thm2}  in particular implies if for a given iterate $z_0$ with $|p(z_0)| \leq  M$, the critical  threshold,  the orbit of $z_0$ under RNM will necessarily converge to
a root of $p(z)$. This is an important property which in the particular case of a cubic polynomial $p(z)$ gives a new iterative algorithm for computing a root of and  hence all roots of $p(z)$.

From Theorem \ref{thm1}, as long as $|p(z_t)p'(z_t)| \geq \varepsilon$,
each iteration of RNM decreases $F(z)$ by at least
${\varepsilon^2}/{9 A^2(z_t)}$.  When
$|p(z_t)| > \varepsilon$, but
$|p(z_t)p'(z_t)| < \varepsilon$,
the decrement could be small. In such a
case if $|p'(z_t)|$ is small the subsequent iterates
may be converging to a critical point. To avoid this, when $|p'(z_t)| < \varepsilon$
we treat $z_t$ as if it is a critical point
and redefine its index as the smallest
$k \geq 2$ such that $|p^{(k)}(z_t)| > \varepsilon$. This is formally defined in the next definition. Since we have
adjusted the next iterate by an approximation, say $\overline z_{t+1}$, the inequality $F(\overline z_{t+1}) - F(z_t) \leq \Delta_k(z_t)$ (see Theorem \ref{thm1} for definition of $\Delta(z_t)$) may not hold.
If the inequality  holds, we proceed as usual. Otherwise,
we ignore $\overline z_{t+1}$ and proceed to compute the next iterate according to RNM, repeating the process stated here.
Eventually, using this scheme,
either we avoid convergence to a critical point while monotonically
reducing $|p(z)|$, or the sequence of iterates
will get closer and closer to a critical point and at some point will escape it.
We formalize these in the following definition and Algorithm 2. To simplify the description we assume $p(z)$ is monic.

\begin{definition} \label{RN2}
Fix $\varepsilon  \in (0,1)$. For a monic polynomial $p(z)$,  let
the {\it Modified RNM iterate} at
a given $z \in \mathbb{C}$ with $p(z) \not =0$, $|p'(z)| < 1$ be defined by setting
\begin{equation} \overline k = \min \{j:  |p^{(j)}(z)| >  \varepsilon \}.
\end{equation}
Set all other parameters as in
(\ref{eq4param}) and  (\ref{angle}) and denote the corresponding $\widehat N_p(z)$ by  $\overline  N_p(z)$.
\end{definition}

\begin{algorithm}[htb]  \label{alg2}                    
\caption{Modified RNM}          
\small
\begin{algorithmic}[]
\STATE Input: monic polynomial $p(z)$ of degree $n \geq 2$, $\varepsilon \in [0,1)$
\STATE  Pick $z_0 \in \mathbb{C}$, $t \gets 0$
\WHILE {$|p(z_t)| >  \varepsilon$}
\IF {$|p'(z_t)| > \epsilon$}
\STATE $z_{t+1} \gets \widehat N_p(z_t)$
\ELSE
\STATE $\overline z_{t+1} \gets \overline N_p(z_t)$
\IF {$F(\overline z_{t+1})- F(z_t) \leq \frac{1}{2} \Delta_{\overline k}(z_t)$}
\STATE $z_{t+1} \gets \overline z_{t+1}$
\ELSE
\STATE {$z_{t+1} \gets \widehat N_p(z_t)$}
\ENDIF
\ENDIF
\STATE $t \gets t+1$
 \ENDWHILE
 \RETURN $z_t$
 \end{algorithmic}
\end{algorithm}

\begin{thm} \label{thm3} {\rm {\bf (Convergence of Modified RNM)}}
Given $\varepsilon \in [0,1)$, for any seed $z_0 \in \mathbb{C}$ the orbit of $z_0$ under Modified RNM  produces an iterate $z_t$ such that $|p(z_t)| \leq \varepsilon$. When $\varepsilon =0$ the orbit converges to a root of $p(z)$.
\end{thm}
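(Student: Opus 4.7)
The plan is to combine the monotone decrease of $F(z_t)=|p(z_t)|^2$ with a compactness argument on a sublevel set, and to rule out accumulation at spurious points by a trichotomy on which branch of Modified RNM is triggered.

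\textbf{Monotonicity and compactness.} I would first verify that $F(z_{t+1})\leq F(z_t)$ in every iteration: when $|p'(z_t)|>\varepsilon$ the iterate is $\widehat N_p(z_t)$ and the reduction is controlled by Theorem~\ref{thm1}; when $|p'(z_t)|\leq\varepsilon$, the acceptance test $F(\overline z_{t+1})-F(z_t)\leq\tfrac{1}{2}\Delta_{\bar k}(z_t)$ is explicitly a reduction, and its failure triggers the $\widehat N_p$ fallback which again reduces $F$. Hence every iterate lies in the compact sublevel set $S_0=\{z:|p(z)|\leq|p(z_0)|\}$, so all relevant derivatives are uniformly bounded by the constant $A_0$ as in Theorem~\ref{thm2}.

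\textbf{Case $\varepsilon>0$.} Assume for contradiction that $|p(z_t)|>\varepsilon$ for every $t$, so $F(z_{t+1})-F(z_t)\to 0$. I would classify iterations into three types: (i) $|p'(z_t)|>\varepsilon$, where (\ref{step1}) gives reduction at least $\varepsilon^4/(9A_0^2)$; (ii) $|p'(z_t)|\leq\varepsilon$ with the test accepted, where monicity forces $|p^{(n)}(z_t)|=n!>\varepsilon$ and hence $\bar k\leq n$, yielding $|u_{\bar k}(z_t)|\geq\varepsilon^2/n!$ and a reduction bounded below by a positive constant depending only on $\varepsilon,n,A_0$; and (iii) $|p'(z_t)|\leq\varepsilon$ with the test rejected, where only the potentially tiny fallback reduction is guaranteed. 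Types (i) and (ii) can happen only finitely often, so eventually every iteration is type (iii). I would then pass to a convergent subsequence $z_{t_j}\to z^*$ with $|p(z^*)|\geq\varepsilon$ and $|p'(z^*)|\leq\varepsilon$, and further refine so that $\bar k(z_{t_j})\equiv\bar m\in\{2,\ldots,n\}$. If $p'(z^*)\neq 0$, the fallback itself delivers a uniform negative reduction near $\Delta_1(z^*)<0$ in a neighborhood, immediately contradicting $F(z_{t+1})-F(z_t)\to 0$. If $p'(z^*)=0$, then $\bar m$ is the true index $k(z^*)$, so $\overline N_p(z^*)=\widehat N_p(z^*)$, and Theorem~\ref{thm1} gives $F(\overline N_p(z^*))-F(z^*)\leq\Delta_{\bar m}(z^*)<\tfrac{1}{2}\Delta_{\bar m}(z^*)$; by continuity the acceptance test must succeed for all large $j$, contradicting that every iteration is of type (iii).

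\textbf{Case $\varepsilon=0$ and the main obstacle.} With $\varepsilon=0$ the algorithm never terminates, but the same scheme applies: monotone decrease plus compactness produces a cluster point $z^*$ with $F(z^*)=\lim_t F(z_t)\geq 0$, and the dichotomy above (non-critical cluster points yield uniform reduction via $\widehat N_p$; true critical cluster points trigger the accept branch) forces $F(z^*)=0$, after which convergence of the whole orbit to a single root follows from the openness of $\widehat{\mathcal{A}}(\theta)$ in Theorem~\ref{thm2}(iv) and the descent property. I expect the main technical obstacle to be the borderline situation in the $\varepsilon>0$ case where $0<|p'(z^*)|\leq\varepsilon$: the true index at $z^*$ is $1$ but the algorithm uses $\bar k\geq 2$, so one must inspect the Taylor expansion at $z^*$ and isolate the strictly negative first-order contribution of $p'(z^*)$ in order to show that the fallback still produces a fixed positive reduction throughout a neighborhood of $z^*$. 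Keeping track of this contribution while letting $\bar k$ vary along the subsequence (handled by subsequential refinement and continuity of $\overline N_p$ with $\bar k$ held fixed) is the delicate point of the argument.
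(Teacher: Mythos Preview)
Your proposal and the paper share the same decisive mechanism: near a genuine critical point $c$, the modified index $\bar k$ eventually agrees with the true index $k(c)$, so by continuity $\overline N_p(z_t)$ behaves like $\widehat N_p(c)$ and the acceptance test passes, producing a definite reduction. The packaging differs. The paper argues directly that once this happens one gets $|p(z_{t+1})|<|p(c)|$, and monotonicity then bars the orbit from ever returning near $c$; with only finitely many critical points to bypass, the orbit must tend to a root. You instead run a contradiction through a trichotomy on iteration types, extract a cluster point $z^*$, and derive a uniform reduction there. Your framework is more careful---in particular it explicitly treats the possibility that the cluster point is \emph{not} critical, which the paper's sketch simply ignores---at the cost of more bookkeeping.

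Your ``main obstacle'' is not one. In the borderline case $0<|p'(z^*)|\leq\varepsilon$ the fallback in type~(iii) is $\widehat N_p(z_t)$, and by Definition~\ref{RN} this uses the \emph{true} index $k(z_t)=1$ (since $p'(z_t)\to p'(z^*)\neq 0$), not $\bar k$. Theorem~\ref{thm1} then gives a reduction of size $|p(z_t)p'(z_t)|^2/(9A^2(z_t))\to|\Delta_1(z^*)|>0$ directly; no Taylor analysis of the $\bar k$-step is needed. A subtler gap, present in both your outline and the paper's sketch, is the case $p'(z^*)=0$ with $0<|p^{(k(z^*))}(z^*)|\leq\varepsilon$: then $\bar k(z_t)>k(z^*)$ along the subsequence, so $\overline N_p(z^*)\neq\widehat N_p(z^*)$ and Theorem~\ref{thm1} does not literally apply to the $\overline N_p$ step. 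Both arguments also pass lightly over $\varepsilon=0$, where the else branch is triggered only when $z_t$ lands exactly on a critical point, so Modified RNM collapses to plain RNM and the bypass mechanism is never actually invoked.
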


\begin{remark} Algorithm 2 shows one possible way to modify RNM to avoid convergence to a critical point. We can alter this in different ways.  For example, if we get small reduction in the step-size or in polynomial modulus that may be an indication that the iterates are converging to a critical point. In order to accelerate the process of bypassing it we may apply RNM to $p'(z)$ itself and once sufficiently close apply the near-critical scheme.
\end{remark}

\section{Solving Quadratic and Cubic Equations Via RNM}
\label{sec3}
In this section we discuss the performance of RNM in solving quadratic and cubic equations. For a quadratic equation with two roots, without loss of generality we consider $p(z)=z^2-1$.

\begin{example}
\label{subsec6}
Given $p(z)=z^2-1$, as proven by  Cayley \cite{Cayley},
for any seed $z_0$ not on the $y$-axis, the orbit
of $z_0$ under Newton iteration
 $N_p(z)=z-(z^2-1)/2z$ converges to the root closest to $z_0$.
 (No point on the $y$-axis converges.)  However, the
 orbits
 are different for the RNM. We consider RNM iterations
for $z_0=0$ and $z_0= \varepsilon i$,
$\varepsilon >0$. See Figure \ref{Fig3}.

(1): $z_0=0$. From Definition (\ref{RN}) and the values $p(0)=-1$, $p'(0)=0$, and
$p''(0)/2=1$, we get $A=1$,
$k=2$, $u_2=-1$, $u_2/|u_2|=-1$,
$\gamma=-2$, $c_2=2$, $\theta=0$, $e^{i \theta}=1$ and $C_2=1/3$. It follows
from (\ref{NewtonGen}) that the RNM iterate
is $z_1= -1/9$. The  decrement is $F(z_1)-F(z_0) \approx -2/81$
while the first upper bound in (\ref{step})
is $-1/81$.

(2) $z_0=\varepsilon i$, $\varepsilon >0$. Then
$p(z_0)=-(1+ \varepsilon^2)$,
$p'(z_0)=2 \varepsilon i$. The Newton iterate is $N_p(z_0)=\varepsilon i - (1+ \varepsilon^2)i/2 \varepsilon$. To compute $\widehat N_p(z_0)$, we have $k=1$. Thus
$A= \max \{1+ \varepsilon^2, 2 \varepsilon, 1\}= 1+ \varepsilon^2$.
 Substituting these into
(\ref{Newtonk=1}) we get, $z_1=\widehat N_p(z_0)=
\varepsilon i- {2 \varepsilon i}/9(1+ \varepsilon^2)
$.
We see that $z_1$ is closer to the origin than $z_0$ by a factor that improves
iteratively.  Thus, starting with any
$\varepsilon \in (0, \infty)$, the sequence $z_{k+1}=\widehat N_p(z_k)$ monotonically
converges to the origin, a critical point.
By virtue of the fact
the RNM iterate is defined
at the origin, we adjust the iterates so as to
avoid convergence to it.

We can treat a near-critical
point as if it is critical point and
compute the next iterate accordingly. Thus for $\varepsilon$ small
we treat
$z_0=\varepsilon i$ as
if it is a critical point. From Definition \ref{RN2} we get $\overline k=2$ since
$u_2=p(z_0) p''(z_0)/2= -(1+\varepsilon^2)$.  We proceed to define the modified RNM: $u_2/|u_2|=-1$, $\gamma = 2 u_2=-2(1+ \varepsilon^2)$.
Thus $c_2=2(1+ \varepsilon^2)$ and  $\theta= 0$ so that
$e^{i \theta}= 1$ and $C_2= 1/3(1+ \varepsilon^2)$. Thus the modified
RNM iterate
becomes  $z_1=- {1}/{9(1+\varepsilon^2)} + \varepsilon i.$
It is easy to see that for $\varepsilon$ small  enough
\begin{equation}
|p(z_1)| <1 = |p(0)| < |p(z_0)|=\sqrt{1+ \varepsilon^2}.
\end{equation}
This together with the fact that
in each iteration RNM  decreases the current polynomial modulus
implies the subsequent iterates will not get close to the origin.
In summary, by treating a near-critical point as a critical point, modified RNM bypasses a critical point for good.
\end{example}

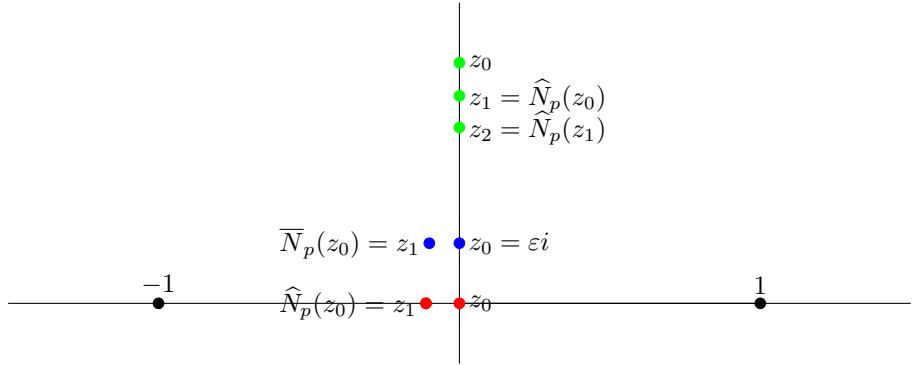
\begin{figure}[htpb]
	\centering
	\begin{tikzpicture}[scale=4.0]

      \draw (-1.5,0) -- (1.5,0) node[pos=0.55, above] {};
      \draw (0,0) -- (1,0) node[pos=0.71, above] {};
      \draw (0,-.2) -- (0,1.) node[pos=0.55, above] {};
       \filldraw (-1,0) circle (.5pt);
       \filldraw (1,0) circle (.5pt);
		\draw (-1,0) node[above] {$-1$};
		\draw (1,0) node[above] {$1$};
 \begin{scope}[red]
           \filldraw (0,0) circle (.5pt);
           \filldraw (-1/9,0) circle (.5pt);
           \filldraw (-1/9,0) circle (.5pt);
\end{scope}[red]
            \draw (0,0) node[right] {$z_0$};
           \draw (-1/9,0) node[left] {$\widehat N_p(z_0)=z_1$};
\begin{scope}[green]
           \filldraw (0,.8) circle (.5pt);
           \filldraw (0,.69) circle (.5pt);
           \filldraw (0,.585) circle (.5pt);
\end{scope}[green]
           \draw (0,.8) node[right] {$z_0$};
           \draw (0,.69) node[right] {$z_1=\widehat N_p(z_0)$};
           \draw (0,.585)  node[right] {$z_2=\widehat N_p(z_1)$};

\begin{scope}[blue]
           \filldraw (0,.2) circle (.5pt);
           \filldraw (-.1,.2) circle (.5pt);
\end{scope}[blue]
           \draw (0,.2) node[right] {$z_0= \varepsilon i$};
           \draw (-.1,.2) node[left] {$\overline N_p(z_0)=z_1$};
	\end{tikzpicture}
\vspace{-3mm}
\caption{{\small A few iterates of RNM for $p(z)=z^2-1$ in the complex plane. The red dots show the critical point $z_0=0$ next iterate $z_1=\widehat N_p(z_0)$. The Green dots show the RNM  iterates of the imaginary axis slowly converging to the origin. The blue dot show a near-critical point $z_0= i \varepsilon$ and Modified RNM iterate $z_1= \overline N_p(z_0)$. $z_1$ has bypassed the origin and its RNM orbit converge to $-1$.}}
\label{Fig3}
\end{figure}

We end this example with a theorem on the convergence of RNM for quadratics.

\begin{thm} \label{thm4}  Consider $p(z)=z^2-1$.

(1) The basin of attraction of $-1$ and $1$ under RNM are their Voronoi regions.

(2) The basin of attraction of $-1$ under modified RNM is its Voronoi region together with its boundary and the basin of attraction of $1$ is its Voronoi region.
\end{thm}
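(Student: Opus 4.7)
The plan for part~(1) is to exploit that the Newton map $N_p(z) = (z^2+1)/(2z)$ preserves the sign of $\mathrm{Re}(z)$, and that the unique critical point $0$ is the only source of discontinuity for $\widehat N_p$ (since $k(z) = 1$ for all $z \neq 0$). I would first verify forward-invariance of the open right half-plane $H^+ = \{z : \mathrm{Re}(z) > 0\}$: writing $\widehat N_p(z) = (1-t(z))z + t(z)N_p(z)$ with $t(z) = |p'(z)|^2/(9A^2(z)) \in (0,1/9]$, the computation $\mathrm{Re}(N_p(x+iy)) = x(x^2+y^2+1)/(2(x^2+y^2))$ yields $\mathrm{Re}(\widehat N_p(z)) = x[1 - t/2 + t/(2|z|^2)] \geq (17/18)x$ for $z \in H^+$. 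Given $z_0 \in H^+$, by Theorem~\ref{thm2}(i) the orbit lies in the compact set $S_0 \cap \overline{H^+}$ and $F(z_t)$ decreases monotonically to some $\ell \geq 0$. Any limit point $z^*$ satisfies $|p(z^*)|^2 = \ell$; since $\widehat N_p$ is continuous off $0$, any $z^* \neq 0$ combined with the strict decrement of Theorem~\ref{thm1} forces $p(z^*) = 0$, and by invariance of $\overline{H^+}$ the only admissible root is $z^* = 1$. So the candidate limit points reduce to $\{0, 1\}$.

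The crux is ruling out $z^* = 0$. I would expand at the origin using $p(z)/p'(z) = -1/(2z) + z/2$ and $A(z) = 1 + O(|z|^2)$ to obtain
\[
\widehat N_p(z) = z + \tfrac{2}{9}\,\overline{z} + O(|z|^3), \qquad \mathrm{Re}(\widehat N_p(z)) = \tfrac{11}{9}\mathrm{Re}(z) + O(|z|^3),
\]
so there exist $r > 0$ and $\lambda > 1$ with $\mathrm{Re}(\widehat N_p(z)) \geq \lambda\,\mathrm{Re}(z)$ for all $z \in H^+$ with $|z| < r$. If a subsequence $z_{t_k} \to 0$ existed, the real parts of subsequent iterates would grow geometrically at rate $\lambda$ while $|z| < r$, forcing the orbit to escape this ball in finite time. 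Passing to a further subsequence of the escape points I obtain a limit $z^{**}$ with $|z^{**}| \geq r/2$, which by the limit-point analysis above must equal $1$, giving $\ell = 0$ and contradicting $\ell = |p(0)|^2 = 1$. Hence $z_t \to 1$, so $H^+ \subseteq \mathcal{\widehat A}(1)$; the symmetry $z \mapsto -z$ gives $H^- \subseteq \mathcal{\widehat A}(-1)$; and for $z_0 = iy_0$ the identity $\widehat N_p(iy) = iy(9y^2+7)/(9(y^2+1))$ shows the orbit stays on the imaginary axis and converges to $0$, hence lies in no basin, finishing part~(1).

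For part~(2), I interpret the basin claim for sufficiently small threshold $\varepsilon > 0$. If $z_0 \in H^+$, part~(1) gives $z_t \to 1$ and $|p'(z_t)| = 2|z_t|$ is bounded below along the orbit (finite-time via $\mathrm{Re}(z_t) \geq (17/18)^t \mathrm{Re}(z_0)$, asymptotically via $|z_t| \to 1$); taking $\varepsilon$ smaller than this bound, modified RNM coincides with RNM and the orbit converges to $1$. If $z_0 = iy_0 \neq 0$ lies on the imaginary axis, RNM keeps the orbit on that axis with $|y_t| \to 0$; at the first $t_0$ with $2|y_{t_0}| < \varepsilon$, the modified iterate $\overline N_p(iy_{t_0}) = -\tfrac{1}{9(1+y_{t_0}^2)} + iy_{t_0}$ lies in $H^-$, satisfies $|p(\overline N_p(iy_{t_0}))| < 1 = M$, and passes the acceptance test $F(\overline z_{t_0+1}) - F(z_{t_0}) \leq \tfrac12\Delta_2(z_{t_0})$ for small $|y_{t_0}|$ (the calculation is exactly that carried out in Example~\ref{subsec6}). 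Hence $z_{t_0+1} \in H^-$ with $|p(z_{t_0+1})| < M$, and Theorem~\ref{thm2}(iii) together with the left-half-plane analogue of part~(1) yields $z_t \to -1$. The case $z_0 = 0$ gives $\widehat N_p(0) = -1/9 \in H^-$ and $z_0 \in H^-$ is symmetric, so the basins of attraction under modified RNM are exactly the Voronoi regions as claimed, with the imaginary axis attached to that of $-1$.

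The main technical obstacle is ruling out $0$ as a limit point in part~(1): $\widehat N_p$ is genuinely discontinuous at $0$ because $k(z)$ jumps from $1$ to $2$, so the standard continuity-plus-decrement argument fails there. The rescue is the quantitative local expansion in the real direction, which shows that any purported accumulation at $0$ in $H^+$ would also produce a secondary limit point at distance $\geq r/2$ from the origin, yielding the required contradiction with $\ell = 1$.
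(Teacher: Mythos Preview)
Your proof of part~(1) is correct but considerably more thorough than the paper's, and in fact supplies an argument the paper omits. The paper's entire proof of~(1) is a one-liner: since $\widehat N_p(z_0)$ lies on the segment between $z_0$ and the Newton iterate $N_p(z_0)$, and by Cayley both endpoints lie in the (convex) Voronoi half-plane, so does $\widehat N_p(z_0)$. That is forward invariance only; the paper never addresses why the orbit actually converges to the root rather than accumulating at the critical point $0$ on the boundary. Your explicit computation of $\mathrm{Re}(\widehat N_p(z))$ recovers the same invariance (it is literally the convex-combination observation written out in coordinates), but your local expansion $\widehat N_p(z)=z+\tfrac{2}{9}\bar z+O(|z|^3)$ and the escape/second-limit-point argument genuinely fill this gap. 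Your treatment of the imaginary axis is likewise absent from the paper's proof. So the approaches coincide on the easy half, but you carry the harder half that the paper waves past.

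For part~(2) the paper simply says ``proof is already done in the example,'' i.e.\ it relies on the near-critical computation for $z_0=\varepsilon i$ and nothing more. Your argument is again more complete, but note one loose thread: when $z_0\in H^+$ you choose $\varepsilon$ smaller than $\inf_t 2|z_t|$, and this infimum depends on $z_0$. For a \emph{fixed} $\varepsilon$, a seed such as $z_0=\delta\in H^+$ with $0<2\delta<\varepsilon$ triggers the modified step immediately, and the computation $\overline N_p(\delta)\approx \delta-\tfrac{1}{9}\in H^-$ shows the iterate can cross into the left half-plane. So as stated, part~(2) holds pointwise in $z_0$ for all sufficiently small $\varepsilon=\varepsilon(z_0)$, not for one uniform $\varepsilon$ over all of $H^+$; the paper is silent on this distinction as well.
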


\begin{proof} Let $V(\pm 1)$ denote the Voronoi regions of $\pm 1$.  To prove (1), we need to show if $z_0 \in V(-1)$ the orbits under RNM stay in $V(-1)$.  From Cayley's result we know that if $z_1=N_p(z_0)$ then $z_1 \in V(-1)$. But we know that $\widehat z_1=\widehat N_p(z_0)$ lies between $z_0$ and $z_1$. Since Voronoi regions are convex $\widehat z_1 \in V(-1)$.  The same arguments apply to $V(1)$. Proof of (2) is already done in the example.
\end{proof}

Next consider a cubic polynomial. The following is a consequence of Theorem \ref{thm2}.

\begin{thm}  \label{thm5} Consider a cubic polynomial $p(z)=a_3z^3+a_2z^2+a_1z+a_0$. Let $c$ be the critical point with smaller polynomial modulus $|p(c)|$ (possibly the only critical point).  Then the orbit of   RNM  at $c$ will converge to a root of $p(z)$.  In particular, by solving the quadratic equation $p'(z)=0$ we compute $c$, and using the RNM orbit of $c$ we approximate a root of $p(z)$ to any precision. Hence all roots of $p(z)$ can be approximated. \qed
\end{thm}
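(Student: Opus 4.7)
The plan is to present Theorem \ref{thm5} as a direct corollary of Theorem \ref{thm2}(iii), which guarantees that any seed with $|p(z_0)| \leq M$ (the critical threshold) yields an RNM orbit converging to a root of $p$.

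The key observation specific to the cubic case is that the derivative $p'(z) = 3a_3 z^2 + 2a_2 z + a_1$ is a quadratic, so its (at most two) roots -- the critical points of $p$ -- are computable in closed form by the quadratic formula. If some critical point $c$ happens to satisfy $p(c) = 0$, then $c$ is itself a root and no iteration is required. Otherwise, writing $c_1, c_2$ for the critical points (with $c_1 = c_2$ if $p'$ has a double root), the definition of the critical threshold gives $M = \min\{|p(c_1)|, |p(c_2)|\}$, and the critical point $c$ selected in the statement realizes $|p(c)| = M$.

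Next I would invoke Theorem \ref{thm2}(iii) at $z_0 = c$: since $|p(c)| = M \leq M$, the orbit $\widehat O^+(c)$ converges to a root of $p(z)$. Two small points merit spelling out. First, RNM is defined at $c$ even though $p'(c) = 0$ (Definition \ref{RN} is engineered to cover precisely this case, via the smallest index $k$ with $p^{(k)}(c) \neq 0$). Second, Theorem \ref{thm1} gives the strict bound $|p(\widehat N_p(c))| < |p(c)| = M$, so from the very first iterate onward the orbit enters the open sublevel set $\{z : |p(z)| < M\}$, which by our choice of $c$ contains no critical point of $p$ whatsoever. Combined with the monotone decrement of $|p(z_t)|$ along the orbit guaranteed by Theorem \ref{thm1}, this rules out convergence to any critical point and forces convergence to a root.

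For the closing assertion that all three roots can be approximated: once a single root $\theta$ has been computed to sufficient accuracy, synthetic division of $p(z)$ by $(z - \theta)$ produces a quadratic factor whose two roots are given explicitly by the quadratic formula, yielding approximations of the remaining two roots of $p$. There is no substantive obstacle in this argument; the only place requiring any care is the boundary case $|p(c)| = M$, and this is dissolved immediately by the strict decrement after one iteration from Theorem \ref{thm1}. The content of Theorem \ref{thm5} is therefore essentially that the hypothesis of Theorem \ref{thm2}(iii) is constructively verifiable for cubics, because the critical points admit a closed-form description.
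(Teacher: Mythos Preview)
Your proposal is correct and follows exactly the approach the paper intends: the paper presents Theorem~\ref{thm5} simply as ``a consequence of Theorem~\ref{thm2}'' and ends with \qed, giving no further detail. Your write-up spells out the intended argument---that for a cubic the critical points are explicitly computable via the quadratic formula, that the chosen $c$ realizes the critical threshold $M$, and hence Theorem~\ref{thm2}(iii) applies---and even handles the edge cases (e.g.\ $p(c)=0$, strict decrease after one step) more carefully than the paper does.
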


\begin{example} Here we consider $p(z)=z^3-1$ and $p(z)=z^3-2z+2$. Figure \ref{Fig5} shows the polynomiographs of these under RNM. These can be contrasted with Figure \ref{Fig1}. $p(z)=z^3-2z+2$ is an example that was considered  by Smale to show that Newton's method can lead to cycles. Under Newton's method $0$ is mapped to $1$ and conversely. Aside from this, if we pick $c=\sqrt{2/3}$, a critical point, the only way
to decrease
the modulus of $p(z)$ at $c$ is to move into the complex plane, see Figure \ref{Fig4}.

While Newton's method is undefined, RNM iterate is capable of taking a step reducing the modulus. Then Theorem \ref{thm2} assures that the RNM orbit of $c$ will converge to a root as expected.
\end{example}

\begin{figure}[h!]
\centering
\includegraphics[width=2.5in]{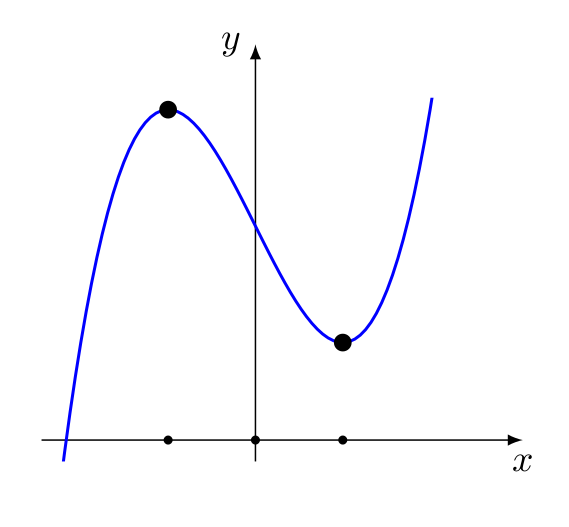}
\caption{Critical points of $x^3-2x+2$ are $\pm \sqrt{2/3}$.  There is no direction of descent for $|p(x)|$ at $\sqrt{2/3}$ on the real line.} \label{Fig4}
\end{figure}

\begin{figure}[h!]
\centering
\includegraphics[width=2.55in]{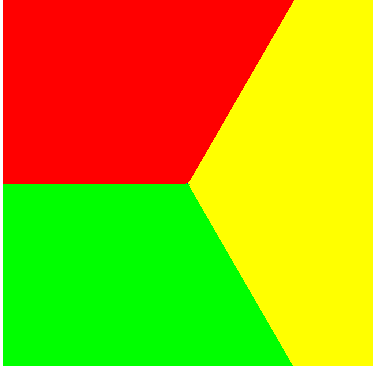}
\includegraphics[width=2.5in]{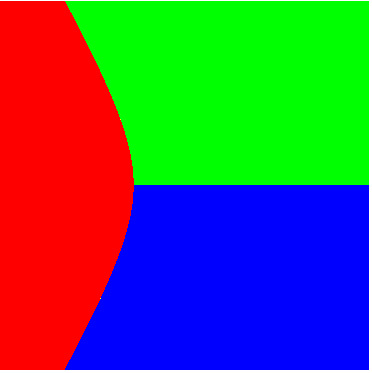}\\
\caption{Polynomiograph of RNM for $z^3-1$ (left) and $z^3-2z+2$.} \label{Fig5}
\end{figure}

\begin{remark}
According to RNM polynomiographs of Figure \ref{Fig5} the basins of attraction of the critical points, in the worst-case, appear to be limited to boundaries of the basins of attraction of the roots. Thus even if $z$ is very close to a critical point $c$ and  $|p(z)| > |p(c)|$,  $|p(\widehat N_p(z))|$ could drop below $|p(c)|$ so that after one iteration of RNM the new iterates bypass $c$.
\end{remark}

\section{Proof of Modulus Reduction Theorem}
\label{sec4}
We first to prove two auxiliary lemmas.

\begin{lemma} {\rm {\bf (Auxiliary Lemma for Selection of Descent Direction)}}\label{lem1}
Let $u \in \mathbb{C}$ be nonzero, and $k$ a natural
number. Define
\begin{equation}
G_k(z)=\overline u z^{k}+ u {\overline z}^k.
\end{equation}
Treating $u$ as $u_k$ in (\ref{eq4param}), define $\gamma, \delta$ and let $c=c_k= \max \{|\gamma|, |\delta|\}$. Next define $\theta$ as in (\ref{angle}). We have
\begin{equation} \label{lem1res}
G_k(\alpha u e^{i \theta})= -c |u|^2\alpha^k <0, \quad \forall \alpha >0.
\end{equation}
\end{lemma}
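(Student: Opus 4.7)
The plan is a direct calculation verifying that the case-wise definition of $\theta$ in (\ref{angle}) is precisely what is needed to make the real part in question maximally negative in a bookkeeping sense. First I would expand
\[
G_k(\alpha u e^{i\theta}) = \overline{u}\,\alpha^k u^k e^{ik\theta} + u\,\alpha^k \overline{u}^{\,k} e^{-ik\theta} = \alpha^k |u|^2\bigl(u^{k-1} e^{ik\theta} + \overline{u^{k-1} e^{ik\theta}}\bigr),
\]
where I used $\overline{u}\,u^k = |u|^2 u^{k-1}$ and its conjugate. Hence
\[
G_k(\alpha u e^{i\theta}) = 2\alpha^k |u|^2\,\mathrm{Re}\!\bigl(u^{k-1} e^{ik\theta}\bigr).
\]
Using the definitions in (\ref{eq4param}), $\mathrm{Re}(u^{k-1}) = \gamma/2$ and $\mathrm{Im}(u^{k-1}) = -\delta/2$, so $u^{k-1} = \tfrac{\gamma}{2} - i\tfrac{\delta}{2}$. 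A short expansion of $u^{k-1} e^{ik\theta}$ then gives
\[
2\,\mathrm{Re}\!\bigl(u^{k-1} e^{ik\theta}\bigr) = \gamma \cos(k\theta) + \delta \sin(k\theta),
\]
and therefore $G_k(\alpha u e^{i\theta}) = \alpha^k |u|^2\bigl(\gamma \cos(k\theta) + \delta \sin(k\theta)\bigr)$.

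The second step is to verify, case by case, that the four choices of $\theta$ in (\ref{angle}) reduce the factor $\gamma \cos(k\theta) + \delta \sin(k\theta)$ to $-c$. In the two cases where $c=|\gamma|$, we have $k\theta \in \{0,\pi\}$, so $\sin(k\theta) = 0$ and $\cos(k\theta) = \pm 1$; the sign is chosen in (\ref{angle}) so that $\gamma\cos(k\theta) = -|\gamma| = -c$. Symmetrically, when $c=|\delta|$ we have $k\theta \in \{\pi/2, 3\pi/2\}$, $\cos(k\theta) = 0$, and the sign of $\sin(k\theta)$ is arranged so that $\delta\sin(k\theta) = -|\delta| = -c$. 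Thus in all four cases $G_k(\alpha u e^{i\theta}) = -c |u|^2 \alpha^k$, proving the identity part of (\ref{lem1res}).

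Finally, to secure strict negativity, I would note that because $u\ne 0$ we have $u^{k-1}\ne 0$, so $(\gamma,\delta)\ne (0,0)$ and hence $c = c_k > 0$. Combined with $\alpha > 0$ and $|u|^2 > 0$, this gives the strict inequality $G_k(\alpha u e^{i\theta}) < 0$. There is no substantive obstacle here: the entire lemma is a mechanical verification that the four prescribed angles realize the easy-to-compute lower bound $c = \max\{|\gamma|,|\delta|\}$ on $|\gamma\cos(k\theta) + \delta\sin(k\theta)|$, rather than the sharper (but less convenient) value $\sqrt{\gamma^2+\delta^2}$. The case analysis is best displayed as a table of $(\cos k\theta, \sin k\theta)$ values against the sign of $\gamma$ or $\delta$.
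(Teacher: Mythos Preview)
Your proof is correct and follows essentially the same approach as the paper: derive the identity $G_k(\alpha u e^{i\theta}) = \alpha^k |u|^2(\gamma\cos k\theta + \delta\sin k\theta)$ via the relations $\gamma = u^{k-1}+\overline{u}^{k-1}$, $\delta = i(u^{k-1}-\overline{u}^{k-1})$, then substitute the four values of $\theta$ from (\ref{angle}). The paper compresses the case check into a single sentence, whereas you spell it out, but the argument is the same.
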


\begin{proof}  Since
$\gamma=u^{k-1}+\overline{u}^{k-1}$,
$\delta=i(u^{k-1}-\overline{u}^{k-1})$,  from conjugation, exponentiation and Euler's formula $e^{i \theta} =\cos \theta + i \sin \theta$, we have
\begin{equation} \label{Gk}
G_k(\alpha ue^{i \theta} )=  \alpha^k |u|^2
(\gamma  \cos k\theta+
\delta \sin k \theta ).
\end{equation}
Since $\gamma$ and $\delta$ are real and  $u \not =0$,  $c >0$. Substituting for
$\theta$ in (\ref{Gk}) proves (\ref{lem1res}).
\end{proof}

\begin{lemma}   {\rm {\bf (Auxiliary Lemma for Step-Size and
Estimate of Modulus Reduction)}} \label{lem2}
Given $C \in (0, \frac{1}{3}]$ and natural number
$k$, define
\begin{equation} \label{eq1lem}
H_C(x)= -Cx^k + \frac{x^{k+1}}{(1-x)}.
\end{equation}
$H_C(x)$  is  negative and monotonically decreasing
on $[0,  \frac{1}{3}C]$.  Moreover,
\begin{equation} \label{eq9lem2}
H_C \left ( \frac{C}{3} \right ) \leq - \frac{3}{2} \bigg (\frac{C}{3} \bigg )^{k+1}.
\end{equation}
\end{lemma}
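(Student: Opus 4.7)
The plan is to factor out the monomial $x^k$ and reduce both claims to elementary estimates on the remaining factor. Writing
$$H_C(x) = x^k\left(-C + \frac{x}{1-x}\right) = x^k\, g(x),$$
the assertions will follow from the sign and derivative of $g$ on $[0, C/3]$, together with a direct evaluation at the endpoint. Observe that the hypothesis $C \leq 1/3$ forces $x \leq 1/9$ on this interval, so $1-x \geq 8/9$ throughout; every subsequent estimate will exploit this uniform lower bound.

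For negativity, note that $g$ is strictly increasing on $[0,1)$ (since $g'(x) = 1/(1-x)^2 > 0$) with $g(0) = -C$. A one-line computation gives $g(C/3) = -C + C/(3-C)$, which is negative whenever $3 - C > 1$; in particular for all $C \leq 1/3$. Hence $g(x) < 0$ throughout $[0, C/3]$ and $H_C(x) \leq 0$ there, with strict inequality for $x > 0$. For monotonicity, differentiate:
$$H_C'(x) = x^{k-1}\bigl[k\, g(x) + x\, g'(x)\bigr] = x^{k-1}\left[-kC + \frac{kx}{1-x} + \frac{x}{(1-x)^2}\right].$$
Using $x \leq C/3$ and $1/(1-x) \leq 9/8$, the two positive terms are bounded by $3kC/8$ and $27C/64$ respectively, so the bracket is at most $-5kC/8 + 27C/64 \leq -13C/64 < 0$ for every $k \geq 1$. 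Therefore $H_C'(x) \leq 0$ on $[0, C/3]$, with strict inequality on $(0, C/3]$, establishing monotone decrease.

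For the reduction estimate, substitute $y = C/3$, so $C = 3y$ and
$$H_C(C/3) = -3y \cdot y^k + \frac{y^{k+1}}{1-y} = y^{k+1}\left(-3 + \frac{1}{1-y}\right) = y^{k+1}\cdot \frac{3y-2}{1-y}.$$
The target inequality $H_C(C/3) \leq -\frac{3}{2}(C/3)^{k+1}$ becomes, after dividing by $y^{k+1}>0$ and clearing the positive denominator $1-y$, simply $6y - 4 \leq -3 + 3y$, i.e.\ $y \leq 1/3$. Since $y = C/3 \leq 1/9$, this holds with considerable slack. There is no real conceptual obstacle in this lemma; the only care needed is to keep the derivative bound uniform in $k$, which is why the terms $-kC$ and $kx/(1-x)$ must be grouped together before bounding.
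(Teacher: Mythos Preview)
Your proof is correct. The endpoint estimate is handled essentially as in the paper: both arguments reduce to the single observation that $1/(1-C/3)\le 3/2$ (equivalently your $y\le 1/3$), which immediately yields $H_C(C/3)\le -3(C/3)^{k+1}+\tfrac{3}{2}(C/3)^{k+1}$.

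Where you genuinely diverge is in the monotonicity step. The paper computes $H_C'(x)=x^{k-1}\bigl(-Ck+\tfrac{(k+1)x-kx^2}{(1-x)^2}\bigr)$, sets the bracket to zero, and solves the resulting quadratic to locate the first critical point $x_*$; it then bounds $x_*\ge Ck/(2Ck+k+1)\ge C/3$, so $H_C$ is decreasing up to $C/3$. You bypass this entirely: after factoring $H_C(x)=x^k g(x)$ you bound each positive term in $H_C'$ crudely via $x\le C/3\le 1/9$ and $1/(1-x)\le 9/8$, obtaining the uniform bracket bound $-5kC/8+27C/64\le -13C/64<0$. This is more elementary---no quadratic formula, no root comparison---and has the pleasant feature that the dependence on $k$ is transparent (the dominant $-kC$ term only helps). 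The paper's route, on the other hand, actually locates the minimizer and would allow one to see that $C/3$ is not far from optimal; your estimates give up some sharpness in exchange for brevity, which is harmless here since only the value at $C/3$ is needed downstream.
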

\begin{proof} Differentiating $H_C(x)$ we get
\begin{equation} \label{GC}
H_C'(x) = x^{k-1} \bigg ( -Ck + \frac{(k+1) x- k x^2}{(1-x)^2} \bigg ).
\end{equation}
From (\ref{GC}) it follows that when $x$ is a small positive number, $H_C'(x)$ is negative so
that $H_C(x)$ is decreasing
at the origin.
Since $H_C(0)=0$ and $H_C(C) >0$, the minimum of $H_C(x)$ over $[0,C]$ is attained.
Solving $H'_C(x)=0$, from
(\ref{GC})
it follows that the minimizer, $x_*$, of $H_C(x)$ is the smaller solution of the quadratic equation
$a_2x^2+a_1x+a_0=0$, where
\begin{equation}
a_2=(Ck+k),  \quad a_1=-(2Ck+(k+1)),  \quad a_0=Ck.
\end{equation}
The roots are
\begin{equation}
\frac{-a_1\pm \sqrt{a_1^2-4a_0a_2}}{2a_2}= \frac{2a_0}{-a_1 \mp \sqrt{a_1^2-4a_0a_2}}.
\end{equation}
Since $a_2a_0 > 0$ the smaller root, say $x_*$, is
$2a_0/(-a_1+ \sqrt{a_1^2- 4a_2a_0}) \geq -2a_0/2a_1= -a_0/a_1$.
Substituting for $a_0$ and $a_1$ and the fact that $C \in (0, \frac{1}{3}]$ we obtain
\begin{equation}
x_* \geq \frac{Ck}{2Ck+(k+1)} \geq \frac{Ck}{2k+1} \geq \frac{C}{3}.
\end{equation}
Substituting $x=C/3$ into $H_C(x)$, using that $H_C(x)$
is monotonically decreasing and negative in $[\frac{1}{3}C,x_*]$,
that  $1/(1 -C/3) \leq 3/2$, we get
\begin{equation}
H_C \left (\frac{C}{3} \right ) \leq -3\left (\frac{C}{3} \right )^{k+1} +\frac{3}{2} \left (\frac{C}{3}\right )^{k+1}=-\frac{3}{2} \left (\frac{C}{3} \right )^{k+1}.
\end{equation}
Hence the proof.
\end{proof}

\subsection{Proof of Theorem \ref{thm1}}
With $z=z_0$, let the parameters $k$, $A=A(z_0)$, $u_k$, $c_k$, and $\theta$ be defined according to (\ref{kindex}) - (\ref{angle}). Also, let  RNM iterate $z_1$ and $C_k$  be defined as in (\ref{NewtonGen}).  Set
\begin{equation} \label{alpha}
\alpha =\frac{C_k}{3|u_k|}, \quad x=\alpha |u_k|= \frac{C_k}{3}.
\end{equation}
From Definition \ref{RN} the RNM iterate is $z_1=z_0+ \alpha u_k  e^{i \theta}$.
We claim $C_k \leq 1/3$ and
\begin{equation} \label{claim2}
F(z_1)- F(z_0) \leq 6A^2 H_{C_k}(x),
\end{equation}
where $H_{C}(x)$ is the function defined in
Lemma \ref{lem2}. Assuming the validity of the claim, then using the
bound  (\ref{eq9lem2}) in Lemma \ref{lem2}, proof of
(\ref{step}) is immediate.
To prove $C_k \leq 1/3$, from the definition of $c_k$,
\begin{equation}
c_k \leq
|u_k|^{k-1} + |\overline {u}_k|^{k-1}=2 |u_k|^{k-1}.
\end{equation}
Also, from
the definition of $A=A(z_0)$,
$|u_k| \leq A^2$. Thus
\begin{equation}
C_k=\frac{c_k |u_k|^{2-k}}{6A^2} \leq \frac{2|u_k|}{6A^2} \leq \frac{1}{3}.
\end{equation}
Next we prove (\ref{claim2}). With
$a_j=p^{(j)}(z_0)/j!$, $j=0, \dots,n$, from
Taylor's theorem we have
\begin{equation}
p(z_1)=a_0+ \sum_{j=k}^n a_j (z_1-z_0)^j=a_0+ \sum_{j=k}^n a_j
(\alpha u_k e^{i \theta})^j.
\end{equation}
We rewrite $p(z_1)$ as
\begin{equation} \label{poly}
p(z_1)=a_0 + a_k w^k +W, \quad w=\alpha u_k e^{i \theta}, \quad
W= \sum_{j=k+1}^n a_jw^j.
\end{equation}
Since  $F(z_1)=p(z_1) \overline{p(z_1)}$, from
(\ref{poly}) it may be written as the following sum of six terms
 \begin{equation} \label{norm}
 F(z_1)= a_0 \overline a_0 + (a_0 \overline a_k \overline w^k +
 \overline{a}_0 a_k w^k)+ (a_0 \overline W +
 \overline{a}_0 W) + (a_k w^k\overline{a}_k \overline w^k)+(a_k w^k \overline W + \overline{a}_k \overline w^k W)+ W \overline W.
\end{equation}
The first term in (\ref{norm}) is
$F(z_0)$. Using Lemma \ref{lem1}, and since $|w|=x$ and
$u_k=a_0 \overline {a_k}$,
we compute the next term in (\ref{norm}):
\begin{equation}  \label{eqq}
(a_0 \overline a_k \overline w^k + \overline{a}_0 a_k w^k)=
\alpha^k (\overline{u}_ku^k e^{ik \theta}  +  u_k \overline {u_k}^k e^{-ik \theta} )=
G_k(\alpha u_ke^{i\theta})= -c_k \alpha^k |u_k|^2 = -c_k |u_k|^{2-k} x^k.
\end{equation}
From (\ref{norm}), (\ref{eqq}) and the
triangle inequality we have
 \begin{equation} \label{taylor2}
 F(z_1)-F(z_0) \leq  -c_k |u_k|^{2-k} x^k + 2 |\overline{a}_0 W|+ |a_k w^k|^2+
 2|\overline{a}_k \overline w^kW| + |W|^2.
\end{equation}
We bound the last four terms in (\ref{taylor2}). Since $ 0 \leq x <1$ we have

\begin{equation}
\sum_{j=k+1}^n x^j \leq
x^{k+1}\sum_{j=0}^\infty x^j = \frac{x^{k+1}}{(1-x)}.
\end{equation}
From this and definition of $w$, $W$, $a_j$ and $A$ we have
\begin{equation} \label{wineq}
|\overline{a}_0 W| \leq A^2 \frac{x^{k+1}}{(1-x)}, \quad
|a_k w^k|^2 \leq A^2 x^{2k}, \quad |\overline{a}_k \overline w^k W| \leq A^2
\frac{x^{2k+1}}{(1-x)}, \quad
|W|^2 \leq A^2 \frac{x^{2k+2}}{(1-x)^2}.
\end{equation}
The first two of the following three inequalities are clear and the third follows since $x =C_k/3 \leq 1/9$ implies  $x^{k+1} \leq (1-x)$.
\begin{equation} \label{3ineq}
A^2 x^{2k} \leq A^2 \frac{x^{k+1}}{(1-x)}, \quad A^2 \frac{x^{2k+1}}{(1-x)} \leq
 A^2 \frac{x^{k+1}}{(1-x)}, \quad A^2 \frac{x^{2k+2}}{(1-x)^2} \leq A^2 \frac{x^{k+1}}{(1-x)}.
\end{equation}
Using  (\ref{3ineq}) in (\ref{wineq}) and subsequently in (\ref{taylor2})
 we obtain the following bound on the decrement in terms of $H_{C_k}(x)$:
\begin{equation} \label{QCX}
F(z_1)- F(z_0)  \leq  -c_k  |u_k|^{2-k} x^k  + 6A^2 \frac{x^{k+1}}{(1-x)}= 6A^2 H_{C_k}(x).
\end{equation}
Then using Lemma \ref{lem2} in (\ref{claim2}) we have
the proof of the first inequality
in (\ref{step}) of Theorem \ref{thm1}. Next we
prove the second inequality
in (\ref{step}).  From the definition of $c_k$ in (\ref{eq4param}) we have
\begin{equation} \label{eqlast1}
c_k \geq |u_k|^{k-1}.
\end{equation}
From (\ref{eqlast1}) and the definition of $C_k$ we get
\begin{equation} \label{eqlast2}
C_k \geq \frac{|u_k|^{k-1} |u_k|^{2-k}}{6A^2}= \frac{|u_k|}{6A^2}.
\end{equation}
Finally, (\ref{eqlast2}) implies the second inequality in
(\ref{step}). The case of $k=1$, see (\ref{step1}) is
straightforward from definitions and the first
 inequality in (\ref{step}). The proof of
 Theorem \ref{thm1} is now complete.

 \section{Proof of Global Convergence Theorems}
 \label{sec5}
Here we prove theorems \ref{thm2} and \ref{thm3}.

\subsection{Proof of Theorem \ref{thm2}}

We prove (i)-(v) in Theorem \ref{thm2}.

(i): It is easy to see $S_0$ is a bounded set. In fact an explicit
bound on the modulus of its points can be computed.  By Theorem \ref{thm1}
for any $z$ in $S_0$ the corresponding orbit under RNM
remains in $S_0$ because the modulus of $p(z)$ decreases from one
iteration to the next.

(ii):  For any iterate $r < t$,  $|p(z_r) p'(z_r)| \geq  \varepsilon$. Then by Theorem \ref{thm1}, $F(z_{r+1})- F(z_r) \leq  \Delta_\varepsilon$.
The upper bound on $t$ now follows from the inequality $|p(z_0)|^2 - t \Delta_\varepsilon \leq 0$.

(iii):  From Theorem \ref{thm1} it follows that for any seed $z_0$ with $|p(z_0)| \leq M$ the sequence of $\Delta_1(z_t)$ must converge to zero. But this happens if and only if $p(z_t)p'(z_t)$ converges to zero. This implies $z_t$ converges to a root of $p(z)$.

(iv):  Let $\theta$ be a root of $p(z)$. From (iii) it follows that there exists $\rho >0$ such that for any $z_0$ in the disc $D_\rho(\theta)=\{z: |z-\theta | < \rho\}$, $\widehat O^+(z)$ converges to some root of $p(z)$. We claim there exists an $\rho$ such that all such orbits converge to $\theta$ itself. Otherwise, there exists a sequence $\{w_t: t=1, 2, \dots\}$ convergent to $\theta$ such that the corresponding sequence $\{N_p(w_t)
: t=1, 2, \dots\}$ converges to a root $\theta' \not = \theta$. But this cannot happen because $N_p(w_t)= w_t- p(w_t)\overline {p'(w_t)}/9A^2(w_t)$ converges to $\theta$.

(v) The proof of this is the direct consequence of GMP and monotonicity property of RNM.
\begin{remark} Consider the computational complexity of each iteration.
If we estimate $A_0$, see (ii) in Theorem \ref{thm2}, ignoring this preprocessing, the
complexity of each iteration is essentially that of computing
$p(z)$ and $p'(z)$ in $O(n)$ operations (e.g. by Horner's method).  Computing the
normalized derivatives $|p^{(j)}(z)|/j!$, hence $A(z)$, in each iteration via straightforward
algorithm  takes $O(n^2)$ operations. However, these can be computed
 efficiently in $O(n \log n)$ operations, see Pan \cite{Pan}.
A practical strategy is to evaluate the normalized derivatives
in every so many iterations, using an estimate for the maximum of their modulus in
computing  $\widehat N_p(z)$.
\end{remark}

\subsection{Proof of Theorem \ref{thm3}}

If the sequence $z_t$ of modified RNM iterates (Algorithm 2) approaches
a critical point, say $c$, then it must be the case that when $t$ is large enough the sequence of indices $\overline k$ corresponding to $\overline z_{t+1}=\overline N_p(z_t)$ will coincide with the index $k$ at $c$. While the corresponding angles $\theta$ defined in (\ref{angle}) may be different from the angle corresponding to $c$, by continuity $F(\overline z_{t+1})-F(z_t)$ must closely approximate $F(\overline z_{t+1})- F(c)$ and hence will approximate the decrement corresponding to $c$, namely $\Delta_k(c)$ assured by Theorem \ref{thm1}.  This implies for some $t$ the inequality $F(\overline z_{t+1}) - F(z_t) \leq 0.5 \Delta_{\overline k}(z_t)$ will be satisfied. Moreover, as $z_t$ gets closer ro $c$, $F(\overline z_{t+1}) < F(c)$. In summary, after a finite number of iterations of the modified RNM, we must have $| p(z_t)| < |p(c)|$ so that RNM iterates will bypass $c$. As there are at most $n-1$ critical points, the orbit of modified RNM will bypass all critical points and converge to a root of $p(z)$. This completes the proof of Theorem \ref{thm3}.
\section{Further Algorithmic Considerations}
\label{sec6}

In this section we remark on some improvements and applications of the RMN.

\subsection{Improving Iteration Complexity}
From Theorem \ref{thm2} in $O(1/\varepsilon^2)$ iterations of RNM we have an iterate $z_t$ such that either $|p(z_t)| \leq \varepsilon$ or $|p(z_t)p'(z_t) \leq \varepsilon$.  When $z_t$ satisfies the first condition we would expect the number of iterations to be better than $O(1/\varepsilon^2)$. This is because locally Newton's method converges quadratically and reduces the polynomial modulus as well. Thus once an iterates enter such a region very few subsequent iterations are needed. On the one hand, at the cost of
one additional function evaluation we can compare the
RNM iterate and Newton iterate
and choose the iterate with smaller $|p(z)|$ value.
On the other hand, we can check Smale's approximate zero
theory explained next.   The only time the number of iterations may be large is when the iterates converge to a critical point. However, even in this case the modified RNM checks if $|p'(z_t)|$ is small and if so by treating $z_t$ as a near-critical point it bypassed the critical point to which the RNM iterates may be converging to and in doing so the orbit avoids the critical point for good.

\subsection{Combining RNM and
Smale's Approximate Zero Theory}
\label{subsec8}
It is well-known that  Newton's method has locally quadratic rate of convergence to
a simple root.
Smale's approximate zero  theory \cite{Smale} gives a sufficient condition for
membership of a point in the quadratic region of convergence. Specifically,
the orbit of
$z_0$ satisfies $|z_j - \theta| \leq .5^{2^j} |z_0-\theta|$ for some root $\theta$
of $p(z)$, provided the following condition holds at $z_0$
\begin{equation} \label{onepoint}
\alpha(z_0) \equiv \beta(z_0) \gamma(z_0)  \leq
 \alpha_0= \frac{1}{4}({13-3 \sqrt{7}}) \approx 0.157,
\end{equation}
where
\begin{equation}
\beta(z_0)  \equiv \left |\frac{p(z_0)}{p'(z_0)} \right |, \quad
\gamma(z_0) \equiv \max \left \{ \left |\frac{p^{(j)}(z_0)}{j! p'(z_0)} \right|^{1/(j-1)}: j=2, \dots, n \right \}.
\end{equation}

We can thus check (\ref{onepoint}) in every  iteration
of
RNM or modified RNM. Observe that since $z_0$ must be a noncritical point,
Smale's condition can alternatively be written as
\begin{equation} \label{onepoint1}
|p(z_0) \overline {p'(z_0)}| \leq  \rho(z_0) \equiv
\frac{\alpha_0 |p'(z_0)|^2}{\gamma(z_0)}.
\end{equation}
Note that from (\ref{eq4param}) $u_1(z_0) = p(z_0) \overline {p'(z_0)}$, the quantity that determines the decrement in RNM (Theorem \ref{thm1}). Thus at an iterate $z_0$, when $|p(z_0) \overline {p'(z_0)}|$ is large enough
Theorem \ref{thm1} implies there is a sufficient reduction in the modulus of the next iterate of RNM. And when $|p(z_0) \overline {p'(z_0)}|$ is small enough, $z_0$ lies in the quadratic region of convergence of some root of $p(z)$. We can also write (\ref{onepoint1}) as
\begin{equation} \label{onepoint2}
|p(z_0)| \leq   \sigma(z_0) \equiv \frac{\alpha_0 |p'(z_0)|}{\gamma(z_0)}.
\end{equation}
While we cannot estimate the number of iterations to enter the region of convergence of a root,  when $|p'(z_0)|$ is below the threshold,
$\sigma(z_0)$ is bounded away  from zero, independent of $\varepsilon$. Once an iterate is sufficiently away from a critical point and its polynomial modulus satisfies (\ref{onepoint2}), by Smale's theory in only a few more iterations of Newton's method, namely $O(\log \log (1/\varepsilon))$ iterations,  we would get an Newton's iterate to within $\varepsilon$ distance of a root of $p(z)$. In summary, RNM and its modified version complement Smale's one-point theory, resulting in a globally convergent method that enjoys quadratic rate of convergence.

\subsection{Computing All Roots} \label{subsec9}
Computing all roots of a complex polynomial is a significant problem both in theory and practice. One of the classical algorithms for the problem is
Weyl's algorithm, a two-dimensional version of the bisection algorithm (see \cite{Pan}) that begins with an initial suspect square containing all the roots. The square is subsequently partitioned into four congruent subsquares and via a proximity test some squares are discarded and the remaining squares are recursively partitioned
into four congruent subsquares and the process is repeated. Once close enough to a root, Newton's method will be used to compute accurate approximations efficiently.
For detail on theoretical and some practical algorithms, see in Pan \cite{Pan} and Bini and Robol \cite{Bini}.   Hubbard et al. \cite{HSS} show all roots of a polynomial can be computed solely via Newton's method. Specifically, it is shown that for polynomials of degree $n$, normalized so that all roots lie in the complex unit disc, there is an explicit set of $1.1n(\log n)^2$ seeds whose orbits are guaranteed to find all roots of such polynomials via Newton's method.
 Schleicher and Stoll \cite{SS} show further results and computational experiments with large degree polynomials.

While we do not offer computational results for computing all roots via RNM or modified RNM, here we describe how it may be combined with {\it deflation} in order to  compute
all roots of a polynomial $p(z)$. Suppose we have computed an approximation $\widehat \theta$ to a root $\theta$ of $p(z)$. As is done in ordinary
deflation, dividing $p(z)$ by $(z-\widehat \theta)$ we get
 $p(z)=(z-\widehat \theta) q(z)+r(z)$.  By continuity, the roots of $q(z)$
 approximate the
remaining roots of $p(z)$.  If we now apply the
RNM to $q(z)$,
starting at $\widehat \theta$, the iterates will
approximate a root of $q(z)$.  Once sufficiently iterated, we should witness the iterates getting father away from $\widehat \theta$. Then we begin iterating $p(z)$
itself and this, by virtue of modulus reduction property of RNM, will in turn approximate a root of
$p(z)$, different from the one approximated by
$\widehat \theta$. More generally, assuming we have
obtained approximation $\widehat \theta_1, \dots, \widehat \theta_m$ to $m$
roots of $p(z)$,  we divide
$p(z)$ by $w(z)=(z-\widehat \theta_1)\times  \cdots \times(z-\widehat \theta_m)$ and apply
the procedure to the new quotient polynomial.
In order to make the algorithm more effective one may first
compute tight bounds on the roots, see \cite{kalbound}
and \cite{Jin}. This simple scheme combined with RNM could enhance standard deflation with standard Newton's method. Future experimentation with such approach could determine the effectiveness of the method.

\subsection*{Concluding Remarks and Links to Experimental Codes}
In this article we have considered minimization of the modulus of a real or complex
polynomial as a basis for computing its roots. Using this objective,  together with utilization of the {\it Geometric Modulus Principle},
we developed a stronger version of the classical
Newton's method, called the {\it Robust Newton Method} (RNM), where the orbit of an arbitrary seed converges,  either to a root or critical point of the underlying polynomial. Each iteration monotonically reduces the polynomial modulus with an {\it a priori} estimate. We also described a {\it modified} RNM  that avoids critical points, converging only to roots.  The results are self-contained, accessible to educators and students, also useful for researchers. Pedagogically, the article introduces novel results regarding Newton's method, including a simple proof of the FTA, and a convergent iterative algorithm for solving a cubic equation. In fact in a graduate numerical analysis class the author proposed research projects based on the implementation of RNM and the polynomiographs based on RNM presented here are based on one such project. In another project, to be released later, students produced an online website that computes all roots of a polynomial for small size polynomials.   Theoretically, based on estimate of modulus reduction, we have justified that the iteration complexity to compute an approximate zero is independent of the degree of the polynomial,  dependent only on the desired tolerance, $\varepsilon$. At the same time RNM is complementary to Smale's theory of approximate zeros, resulting in an algorithm that is globally convergent while locally it enjoying quadratic rate of convergence at simple roots. RNM and modified RNM  can be used to compute all roots of a polynomial by themselves or in combination with other algorithms, such as \cite{Pan},  \cite{HSS} and \cite{SS}. These suggest several areas of research and experimentation, including development of new algorithms for computing all roots,  extending RNM to more general iterative methods.  Finally, visualization of RNM, as demonstrated in a few example here, result in images distinct from the existing fractal and non-fractal polynomiographs such as those in
\cite{Kalan}.  In particular, RNM smooths out the fractal boundaries of basins of attraction. In conclusion we give two links to experimental codes carried out by students, one for generating polynomiographs of RNM and another for computing all roots of small degree polynomials. These are  available at https://github.com/baichuan55555/CS510-Project-1 and https://github.com/fightinglinc/Robust-Newton-Method, respectively.
However, we believe RNM and its modifications would invite the readers to carry out their own experimentation and further investigations.

\subsection*{Acknowledgements} I would like to thank some of my students, Matthew Hohertz for carefully reading an earlier version and making useful comments. Also two groups of students in my graduate numerical analysis course, Baichuan Huang, Jiawei Wu, and Zelong Li for generating the relevant polynomiographs in the article, also Linchen Xie and Shanao Yan for producing an applet to compute all roots of small degree polynomial.  The github links are given above.

\small{

}

\bigskip

\end{document}